\newtheorem{theorem}{Theorem}[section]
\newtheorem{corollary}{Corollary}[section]
\newtheorem{lemma}[theorem]{Lemma}
\theoremstyle{definition}
\newtheorem{definition}{Definition}[section]
\newtheorem{proposition}{Proposition}[section]
\newtheorem{example}{Example}[section]
\theoremstyle{remark}
\newtheorem{remark}{Remark}[section]
\newcommand\norm[1]{\left\lVert#1\right\rVert}
\definecolor{Gray}{gray}{0.9}
\date{}
\begin{document}
\title{ Vieta-Lucas Wavelet based schemes for the numerical solution of the singular models}
\author[1]{Shivani Aeri }
\author[1]{Rakesh Kumar}
\author[2,3]{Dumitru Baleanu}
\author[4*]{Kottakkaran Sooppy Nisar}
\affil[1]{School of Mathematics, Computers and Information Sciences, Central University of Himachal Pradesh, Shahpur Campus, Shahpur 176206, India.  (\textit{e-mail:} shivvani.aeri@gmail.com, rakesh.lect@hpcu.ac.in) }
\affil[2]{Department of Mathematics, Cankaya University, 06810 Ankara, Turkey. (\textit{e-mail:} dumitru@cankaya.edu.tr)}
\affil[3]{Institute of Space Sciences, Magurele-Bucharest, Romania}
\affil[4]{Department of Mathematics, College of Science and Humanities in Alkharj, Prince Sattam Bin Abdulaziz University, 11942, Saudi Arabia. (\textit{e-mail:} n.sooppy@psau.edu.sa)}

\date{}
\maketitle
\begin{abstract}
\noindent In this paper, numerical methods based on Vieta-Lucas wavelets are proposed for solving a class of singular differential equations. The operational matrix of the derivative for Vieta-Lucas wavelets is derived. It is employed to reduce the differential equations into the system of algebraic equations by applying the ideas of the collocation scheme, Tau scheme, and Galerkin scheme respectively. Furthermore, the convergence analysis and error estimates for Vieta-Lucas wavelets are performed. In the numerical section, the comparative analysis is presented among the different versions of the proposed Vieta-Lucas wavelet methods, and the accuracy of the approaches is evaluated by computing the errors and comparing them to the existing findings.\\ \\
\textbf{Keywords:} Vieta-Lucas wavelets, generating function, Rodrigues' formula, collocation method, Galerkin method, Tau method.
\end{abstract}

\section{Introduction}
\label{sec1}
Singular differential equations (SDEs) have been attracting applied mathematicians for many years because of their applicability in various branches of science, engineering, and technology \cite{stakgold2000boundary,gatica1989singular}. We consider the following SDEs \cite{sabir2021design,sabir2020new}: 
\begin{equation} \label{introeq1}
\rm{ Y''(\zeta) + \frac{\mu}{\zeta}  Y'(\zeta) + f(\zeta, Y(\zeta)) = g(\zeta), ~~ \zeta \in (0,L),  }
\end{equation}
 with initial conditions 
  \begin{equation}
    \rm{ Y(\zeta)\mid_{\zeta=0} = \alpha_{0}, \:  Y'(\zeta)\mid_{\zeta=0} = \alpha_{1}},
 \end{equation}
  or boundary conditions 
 \begin{equation}
    \rm{ Y(\zeta)\mid_{\zeta=0} = \beta_{0}, \:  Y(\zeta)\mid_{\zeta=L} = \beta_{1}},
 \end{equation}
 where  $\rm{g(\zeta)}$ is an analytical function, $\rm{f(\zeta, Y(\zeta))}$ is a continuous real valued function, and $\alpha_{0}$, $\alpha_{1}$, $\beta_{0}$, $\beta_{1}$ are arbitrary constants. Several researchers have been interested in singular models since the available singularity in these models makes computational efforts more tedious. The few well-known singular models which have wider applications in thermodynamics, astrophysics, and atomic physics, are named as  Emden-Fowler model, Lane-Emden model, and Thomas-Fermi model   \cite{sabir2018neuro}. The Runge-Kutta technique, Euler's method, Adams method, Milne predictor-corrector approach, and other traditional methods fail to analyze the singular models efficiently \cite{sabir2021efficient}. SDEs generally appear with variable coefficients, and the solution to these equations can be obtained by the power series method, continued fractions, and integral transforms. The solution to these equations leads to the generation of several special functions \cite{da2019singular}. The existence and uniqueness of the solutions for the SDEs have been investigated by Ford and Pennline \cite{ford2009singular}. The singularities occurring in these equations have encouraged numerous researchers to explore the solutions by proposing novel numerical schemes. To address singular nonlinear problems, Bender et al. \cite{bender1989new} presented a perturbation approach. Russell and Shampine \cite{russell1975numerical} used patch bases,  collocation method, and finite difference method for the treatment of SBVPs. Wazwaz \cite{wazwaz2002new,wazwaz2011variational} respectively employed Adomian decomposition to solve SIVPs and variational iteration method to solve the nonlinear SBVPs. By incorporating a quasi-linearization approach, El-Gebeily and O'Regan\cite{el2007quasilinearization} solved second-order nonlinear SDEs. Yildirim and Ozis \cite{yildirim2007solutions} employed the homotopy perturbation method to solve SIVPs. Pandey \cite{pandey1997finite} used the finite difference method to solve a class of SBVPs. Sabir et al. used artificial neural networking based algorithms to solve various SDEs \cite{guirao2020design, sabir2020intelligence, sabir2021evolutionary}. Researchers have also utilized Chebyshev polynomials \cite{kadalbajoo2005numerical}, Legendre polynomials \cite{el2019numerical}, Laguerre polynomials \cite{zhou2016numerical,baishya2021laguerre}, Jacobi polynomials \cite{el2020novel} and Hermite polynomials \cite{parand2010approximation} to derive the solutions of SDEs. However, less emphasis has been placed on the Vieta-Lucas polynomials (VLPs). Recently,
 VLPs based schemes are used by 
 Agarwal \cite{agarwal2020vieta, agarwal2019some,zhou2017solvability,agarwal2018fractional} and El-Sayed to solve fractional advection-dispersion equation and Heydari et. al \cite{heydari2021vieta} to solve variable order fractional Ginzburg-landau equations. 
\\
In recent decades, wavelets have piqued the interest of many academicians due to their ability in solving differential equations with high precision and minimal processing effort \cite{torrence1998practical,mehra2009wavelets}. Recently, Chebyshev wavelets \cite{yuanlu2010solving,rostami2012comparative}, Legendre wavelets\cite{razzaghi2001legendre}, Hermite wavelets \cite{kumar2021fractional}, Jacobi wavelets \cite{eslahchi2018use}, Gegenbauer wavelets \cite{usman2018efficient} and Lucas wavelets\cite{koundal2020lucas,koundal2022lucas} have been successfully utilized in literature for the simulation of various phenomena of scientific and technical importance. Vieta-Lucas wavelets have not been used in past to solve singular differential equations yet. Recently, some work is reported in the literature on Vieta-Lucas polynomials, but not much attention has been given to Vieta-Lucas wavelets. The wavelets are localized in nature and have compact support so to fill the literature gap, we choose to construct the Vieta-Lucas wavelets.\\
 The main concern of the paper is to propose a novel class of wavelets called as Vieta-Lucas wavelets that can handle various differential equations efficiently. The novelty of the work includes the derivation of generating function and Rodrigues' formula for VLPs. The shifted form of VLPs is used to prepare the Vieta-Lucas wavelets. The operational matrix (OM) of derivatives is proposed to formulate the numerical scheme. The Emden-Fowler and Lane-Emden type SDEs are also solved by the proposed approaches and perform well near the singularity. The accuracy of the methods is analyzed by computing the errors in $L^2$ and $L^{\infty}$ norms.\\
The remaining part is organized as: A brief overview of VLPs is given in section \ref{sec2} by including generating function, Rodrigues' formula, and other significant properties. In order to solve the problems over $[0, L]$, shifted version of VLPs is given in section \ref{sec3}. In section \ref{sec4}, Vieta-Lucas wavelets and their functional approximations are presented, and their operational matrix of the derivative is derived in section \ref{sec5}. In section \ref{sec6} Vieta-Lucas wavelets-based numerical schemes are provided. Section \ref{sec7} deals with the convergence and error estimation for Vieta-Lucas wavelets. In section \ref{sec8}, numerous illustrations are solved by the proposed approaches, and results are provided in section \ref{sec9} that validate the efficiency and reliability of the proposed methods. Finally, the conclusion of the proposed work is demonstrated in section \ref{sec10}.

\section{Vieta-Lucas polynomials and properties} \label{sec2}
In this section, we give brief introduction about Vieta-Lucas polynomials, recurrence relation,  orthogonality property, generating function, Vieta-Lucas differential equation and Rodrigues' formula.   
\begin{definition}
The Vieta-Lucas polynomials $\rm{VL_{m}(\zeta)}$ of degree $\rm{m}$ ($\rm{m \in \mathbb{N} \cup \{0\}}$) can be defined as \cite{horadam2002vieta,robbins1991vieta}:
\begin{equation}
 \rm{VL_{m}(\zeta)= 2 \cos (m\delta)},  
\end{equation}
where $\delta = \arccos{(\frac{\zeta}{2})}$ and $ | \zeta |  \in  [-2,2]$, $\delta \in  [0,\pi].$

\end{definition}
\begin{proposition} \label{p2.1}
The recurrence relation for Vieta-Lucas polynomials $\rm{VL_{m}}(\zeta)$ is given by \cite{horadam2002vieta}:
\begin{equation}
\rm{VL_{m}(\zeta) = \zeta VL_{m-1}(\zeta) - VL_{m-2}(\zeta)},~~ m \geq 2, 
\end{equation}
with $\rm{VL_{0}(\zeta) = 2}$  and 
$\rm{VL_{1}(\zeta) = \zeta}$.
\end{proposition}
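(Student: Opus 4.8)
The plan is to work directly from the trigonometric definition $\rm{VL_{m}(\zeta) = 2\cos(m\delta)}$ with $\delta = \arccos(\zeta/2)$, so that $\cos\delta = \zeta/2$ and equivalently $\zeta = 2\cos\delta$ for $|\zeta| \in [-2,2]$, $\delta \in [0,\pi]$. Under this substitution the three-term recurrence becomes a direct consequence of the standard sum-to-product formula for cosines, so the argument is essentially computational once the correct identity is identified.

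First I would dispose of the base cases by evaluating the definition at small indices: setting $m=0$ gives $\rm{VL_{0}(\zeta)} = 2\cos 0 = 2$, and setting $m=1$ gives $\rm{VL_{1}(\zeta)} = 2\cos\delta = 2\cdot(\zeta/2) = \zeta$, which match the stated initial values.

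Next, for $m \geq 2$, I would apply the cosine sum-to-product identity
\[
\cos(A+B) + \cos(A-B) = 2\cos A\,\cos B
\]
with the choices $A = (m-1)\delta$ and $B = \delta$, obtaining
\[
\cos(m\delta) + \cos\bigl((m-2)\delta\bigr) = 2\cos\bigl((m-1)\delta\bigr)\cos\delta.
\]
Multiplying through by $2$ and recognizing each term as a Vieta-Lucas polynomial yields
\[
\rm{VL_{m}(\zeta)} + \rm{VL_{m-2}(\zeta)} = 2\,\rm{VL_{m-1}(\zeta)}\cos\delta.
\]
Substituting $2\cos\delta = \zeta$ and rearranging then produces the claimed relation $\rm{VL_{m}(\zeta)} = \zeta\,\rm{VL_{m-1}(\zeta)} - \rm{VL_{m-2}(\zeta)}$.

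This argument presents no genuine obstacle; the only points requiring care are the choice of arguments $A$ and $B$ in the sum-to-product identity so that the indices $m$ and $m-2$ enter symmetrically, and the observation that the change of variable $\zeta = 2\cos\delta$ is a bijection from $[0,\pi]$ onto $[-2,2]$. The latter guarantees that the identity, once established for all $\delta \in [0,\pi]$, holds as a polynomial identity in $\zeta$ on the whole interval; since both sides are polynomials in $\zeta$ agreeing on an interval, they agree identically, which also allows the recurrence itself to serve as the extension of $\rm{VL_{m}}$ beyond $|\zeta| \leq 2$.
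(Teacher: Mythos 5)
Your proof is correct, but there is nothing in the paper to compare it against: the paper states Proposition \ref{p2.1} as a known fact with a citation to Horadam and gives no proof of it (the paper only proves the generating function, the Rodrigues formula, and the expansion of $\zeta^m$, the first of which actually \emph{uses} this recurrence as an ingredient). Your argument --- base cases $\mathrm{VL}_0 = 2\cos 0 = 2$, $\mathrm{VL}_1 = 2\cos\delta = \zeta$, followed by the sum-to-product identity $\cos(m\delta) + \cos((m-2)\delta) = 2\cos((m-1)\delta)\cos\delta$ and the substitution $2\cos\delta = \zeta$ --- is the standard derivation from the trigonometric definition and is sound; this is in effect the Chebyshev-type recurrence in disguise, since $\mathrm{VL}_m(\zeta) = 2T_m(\zeta/2)$. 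One small caution about your closing paragraph: the claim that polynomiality of both sides lets you extend the recurrence beyond $|\zeta| \leq 2$ is harmless as an aside but slightly circular as written, because the fact that $\mathrm{VL}_m$ \emph{is} a polynomial is itself normally deduced from this very recurrence (or from the power-series form in Proposition \ref{p2.2}, which logically follows it). For the statement as actually posed, where the definition and the identity both live on $[-2,2]$ and $\zeta \mapsto \delta$ is a bijection, the substitution argument alone already completes the proof, and the extension remark can simply be dropped.
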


 \begin{proposition}
 \label{p2.2}
The Vieta-Lucas polynomials can be represented in terms of power series expansion as \cite{horadam2002vieta}:
\begin{equation}
\label{eq2.2 1}
    \rm{VL_{m}(\zeta) = \sum_{i=0}^{\lfloor m/2 \rfloor} (-1)^{i} \frac{m (m-i-1)!\: }{i! (m-2i)!} \zeta^{m-2i}}, ~~ m \geq 1.
\end{equation}
\end{proposition}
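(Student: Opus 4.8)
The plan is to argue by strong induction on $m$, using the three-term recurrence of Proposition \ref{p2.1} as the engine and the closed form as the target. First I would dispose of the base cases: for $m=1$ the right-hand side reduces to the single term $\zeta$, matching $VL_1(\zeta)=\zeta$, and for $m=2$ it reduces to $\zeta^2-2$, which agrees with $VL_2(\zeta)=\zeta VL_1(\zeta)-VL_0(\zeta)=\zeta^2-2$. The case $m=2$ has to be checked by hand because the recurrence for $VL_2$ feeds on $VL_0(\zeta)=2$, a value that is deliberately excluded from the $m\ge 1$ formula. One could instead derive the identity analytically from $VL_m(\zeta)=2\cos(m\delta)$ with $\zeta=2\cos\delta$ via De Moivre's theorem, but the inductive route is cleaner given the tools already in hand.

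For the inductive step, fix $m\ge 3$ and assume the stated expansion for both $VL_{m-1}$ and $VL_{m-2}$ (both indices are $\ge 1$, so the formula applies). I would substitute these into $VL_m(\zeta)=\zeta VL_{m-1}(\zeta)-VL_{m-2}(\zeta)$ and organise the result by common powers of $\zeta$. Multiplying the $VL_{m-1}$-sum by $\zeta$ turns its generic term into $\zeta^{m-2i}$ with coefficient $(-1)^i\frac{(m-1)(m-i-2)!}{i!\,(m-1-2i)!}$, while reindexing the $VL_{m-2}$-sum by $j\mapsto i-1$ turns its generic term into $\zeta^{m-2i}$ with coefficient $(-1)^{i-1}\frac{(m-2)(m-i-2)!}{(i-1)!\,(m-2i)!}$. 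Collecting the coefficient of $\zeta^{m-2i}$ then yields $(-1)^i\left[\frac{(m-1)(m-i-2)!}{i!\,(m-1-2i)!}+\frac{(m-2)(m-i-2)!}{(i-1)!\,(m-2i)!}\right]$, and the goal becomes to show this equals $(-1)^i\frac{m(m-i-1)!}{i!\,(m-2i)!}$.

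The crux is a factorial identity. Using $\frac{1}{(m-1-2i)!}=\frac{m-2i}{(m-2i)!}$ and $\frac{1}{(i-1)!}=\frac{i}{i!}$, I would factor $\frac{(m-i-2)!}{i!\,(m-2i)!}$ out of the bracket, reducing it to $(m-1)(m-2i)+(m-2)i$. A short expansion gives $(m-1)(m-2i)+(m-2)i=m(m-1-i)$, and since $(m-1-i)(m-i-2)!=(m-i-1)!$, the bracket collapses exactly to $\frac{m(m-i-1)!}{i!\,(m-2i)!}$, which is the claimed coefficient.

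The main obstacle is not this computation but the bookkeeping at the endpoints of the summations, where the parity of $m$ matters: the three ranges $0\le i\le\lfloor(m-1)/2\rfloor$, $1\le i\le\lfloor(m-2)/2\rfloor+1$, and the target range $0\le i\le\lfloor m/2\rfloor$ do not coincide term by term. I would handle this uniformly by adopting the convention that $1/n!=0$ whenever $n$ is a negative integer; with this reading the $VL_{m-1}$ contribution automatically vanishes at the extremal index when $m$ is even (there $m-1-2i=-1$), the reindexed $VL_{m-2}$ sum contributes nothing at $i=0$, and one checks that $\lfloor m/2\rfloor=\max\{\lfloor(m-1)/2\rfloor,\lfloor(m-2)/2\rfloor+1\}$ in both parities. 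The combined coefficient formula above then holds for every $i$ from $0$ to $\lfloor m/2\rfloor$, completing the induction.
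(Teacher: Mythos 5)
Your proposal is correct, but there is nothing in the paper to compare it against: Proposition \ref{p2.2} is stated with only a citation to Horadam's paper and no proof is given, so your induction actually supplies an argument the paper omits. The proof itself checks out at every point I can test. The base cases $m=1,2$ are right (and you are correct that $m=2$ must be verified separately, since the recurrence there invokes $VL_0=2$, which lies outside the scope of the formula). The inductive step is sound: multiplying the $VL_{m-1}$ expansion by $\zeta$ and shifting the index of the $VL_{m-2}$ expansion both produce $\zeta^{m-2i}$ terms, the sign bookkeeping $-(-1)^{i-1}=(-1)^i$ is right, and the key identity $(m-1)(m-2i)+(m-2)i=m(m-1-i)$ together with $(m-1-i)\,(m-i-2)!=(m-i-1)!$ collapses the bracket to the claimed coefficient $\frac{m\,(m-i-1)!}{i!\,(m-2i)!}$. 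The endpoint analysis is also handled correctly: with the convention $1/n!=0$ for negative integers $n$, the boundary terms degenerate exactly as you say (for even $m$ the top index $i=m/2$ receives its entire contribution from the reindexed $VL_{m-2}$ sum, and one can verify directly that this contribution, $(-1)^{m/2}\frac{(m-2)(m/2-2)!}{(m/2-1)!}=2(-1)^{m/2}$, equals the target constant term), and $\lfloor m/2\rfloor=\max\{\lfloor (m-1)/2\rfloor,\lfloor (m-2)/2\rfloor+1\}$ holds in both parities. Two remarks on context: your inductive route is stylistically consistent with the paper, which proves its generating-function proposition from the same recurrence; and the alternative you mention in passing, expanding $2\cos(m\delta)$ in powers of $2\cos\delta$ via De Moivre (essentially the classical Waring/Chebyshev expansion, since $VL_m(\zeta)=2T_m(\zeta/2)$), is presumably the proof behind the cited reference, but it requires extracting binomial coefficients from a double expansion and is no shorter than what you did.
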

The first few Vieta-Lucas polynomials are given as:
\begin{equation*}
\begin{split}
    \rm{VL_{0}(\zeta)} &= 2 , \\
    \rm{VL_{1}(\zeta)} &= \zeta ,\\
    \rm{VL_{2}(\zeta)} &=\zeta^2 - 2 ,\\
    \rm{VL_{3}(\zeta)} &= \zeta^3 - 3\zeta,\\
    \rm{VL_{4}(\zeta)} &= \zeta^4 - 4\zeta^2 +2,\\
    \rm{VL_{5}(\zeta)} &=  \zeta^5 - 5 \zeta^3 +5 \zeta,\\
    \rm{VL_{6}(\zeta)} &=  \zeta^6 - 6 \zeta^4 +9 \zeta^2 - 2.
\end{split}
\end{equation*}

\begin{proposition}
The Vieta-Lucas polynomials $\rm{VL_{n}(\zeta)}$ and $\rm{VL_{m}(\zeta)}$  defined over $[-2,2]$ are orthogonal in weighted sense with the weight function  $\rm{w(\zeta)= \frac{1}{\sqrt{4 - \zeta^2}}}$ and satisfy the following condition \cite{agarwal2020vieta}:
\begin{equation}
    \rm{\langle VL_{n}(\zeta) ,  VL_{m}(\zeta) \rangle_{w(\zeta)} = \int_{-2}^{2}  VL_{n}(\zeta) VL_{m}(\zeta) w(\zeta)  \,d\zeta=
    \begin{cases}
      4 \pi,   & n = m = 0,  \\ 
      2 \pi,   & n = m \neq 0,  \\ 
       0 ,     & n \neq m.
\end{cases}}
\end{equation}
\end{proposition}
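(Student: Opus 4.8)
The plan is to exploit the defining trigonometric representation $\rm{VL_m(\zeta) = 2\cos(m\delta)}$ with $\delta = \arccos(\zeta/2)$, which turns the weighted polynomial inner product into a textbook orthogonality integral for cosines. Concretely, I would introduce the substitution $\zeta = 2\cos\theta$ (so that $\theta$ coincides with $\delta$), under which the endpoints transform as $\zeta = -2 \mapsto \theta = \pi$ and $\zeta = 2 \mapsto \theta = 0$, with $d\zeta = -2\sin\theta\,d\theta$.

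The key simplification is that the awkward weight and the Jacobian collapse to a constant. Since $\theta \in [0,\pi]$ forces $\sin\theta \geq 0$, we may take the nonnegative branch $w(\zeta) = (4 - \zeta^2)^{-1/2} = (4 - 4\cos^2\theta)^{-1/2} = (2\sin\theta)^{-1}$, so that $w(\zeta)\,d\zeta = (2\sin\theta)^{-1}(-2\sin\theta)\,d\theta = -\,d\theta$. Feeding this together with $\rm{VL_n(\zeta) = 2\cos(n\theta)}$ and $\rm{VL_m(\zeta) = 2\cos(m\theta)}$ into the integral, and absorbing the sign into a reversal of the limits of integration, reduces the inner product to
\begin{equation*}
\rm{\langle VL_{n}(\zeta), VL_{m}(\zeta) \rangle_{w(\zeta)} = \int_{0}^{\pi} 4 \cos(n\theta)\cos(m\theta)\,d\theta.}
\end{equation*}

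It then remains to evaluate this elementary integral via the product-to-sum identity $\cos(n\theta)\cos(m\theta) = \tfrac12\bigl[\cos((n+m)\theta) + \cos((n-m)\theta)\bigr]$. For $n \neq m$ both frequencies $n \pm m$ are nonzero integers, so each cosine term integrates to zero over $[0,\pi]$; for $n = m \neq 0$ the $\cos((n-m)\theta)$ term contributes the constant $\tfrac12$, yielding $4\cdot\tfrac{\pi}{2} = 2\pi$; and for $n = m = 0$ the integrand is simply $4$, giving $4\pi$. These three outcomes are precisely the three cases in the statement.

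I do not anticipate a genuine obstacle here, since the result is essentially one change of variables away from the classical orthogonality of $\{\cos(k\theta)\}$ on $[0,\pi]$. The only points that demand care are the sign bookkeeping when reversing the limits, the selection of the nonnegative branch of $\sqrt{4-\zeta^2}$ on $[0,\pi]$, and keeping the degenerate case $n=m=0$ separate from $n=m\neq 0$, which is exactly what produces the $4\pi$ versus $2\pi$ distinction.
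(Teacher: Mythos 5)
Your proof is correct. Note, however, that the paper itself offers no proof of this proposition at all: it is stated with a citation to Agarwal and El-Sayed \cite{agarwal2020vieta} and the orthogonality is taken as known, so there is no in-paper argument to compare yours against. Your route --- the substitution $\zeta = 2\cos\theta$, under which the weight and Jacobian collapse via $w(\zeta)\,d\zeta = -\,d\theta$ and the inner product becomes $\int_0^\pi 4\cos(n\theta)\cos(m\theta)\,d\theta$ --- is the standard one, and your case analysis is complete: for $n \neq m$ (with $n, m \geq 0$) both $n+m$ and $n-m$ are nonzero integers so both product-to-sum terms vanish, while $n = m \neq 0$ and $n = m = 0$ give $2\pi$ and $4\pi$ respectively. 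One small bonus of your substitution worth making explicit: the original integral is improper (the weight $1/\sqrt{4-\zeta^2}$ blows up at $\zeta = \pm 2$), and the change of variables converts it into a proper integral, which settles convergence at the same stroke. The bookkeeping points you flag (sign reversal of the limits, the nonnegative branch of $\sin\theta$ on $[0,\pi]$) are handled correctly.
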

\begin{proposition}
The generating function for Vieta-Lucas polynomials is defined as:
\begin{equation}
    \rm{\sum_{m=0}^{\infty} VL_{m}(\zeta) t^m = \frac{2 - \zeta t}{1- \zeta t + t^2}}. 
\end{equation}
\end{proposition}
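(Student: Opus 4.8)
The plan is to let $G(\zeta, t) = \sum_{m=0}^{\infty} \mathrm{VL}_m(\zeta)\, t^m$ denote the power series in $t$ whose coefficients are the Vieta-Lucas polynomials, and to show directly that multiplying $G$ by the denominator $1 - \zeta t + t^2$ collapses the series to the claimed numerator $2 - \zeta t$. The entire argument is driven by the three-term recurrence of Proposition \ref{p2.1} together with the two initial values $\mathrm{VL}_0(\zeta)=2$ and $\mathrm{VL}_1(\zeta)=\zeta$.

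First I would form the three shifted series $G$, $\zeta t\,G$, and $t^2 G$, reindexing so that each becomes a sum over $t^m$: explicitly $\zeta t\,G = \sum_{m\ge 1}\zeta\,\mathrm{VL}_{m-1}(\zeta)\,t^m$ and $t^2 G = \sum_{m\ge 2}\mathrm{VL}_{m-2}(\zeta)\,t^m$. Then I would read off the coefficient of $t^m$ in the product $(1-\zeta t+t^2)\,G$. For every $m\ge 2$ that coefficient is exactly $\mathrm{VL}_m(\zeta)-\zeta\,\mathrm{VL}_{m-1}(\zeta)+\mathrm{VL}_{m-2}(\zeta)$, which vanishes by the recurrence. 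Hence only the $m=0$ and $m=1$ terms survive, and substituting the initial values gives $\mathrm{VL}_0(\zeta)=2$ for $m=0$ and $\mathrm{VL}_1(\zeta)-\zeta\,\mathrm{VL}_0(\zeta)=\zeta-2\zeta=-\zeta$ for $m=1$. Collecting these, $(1-\zeta t+t^2)\,G(\zeta,t)=2-\zeta t$, and dividing by the nonzero polynomial $1-\zeta t+t^2$ yields the stated identity.

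The step I expect to require the most care is not the algebra of the recurrence but the legitimacy of manipulating the series, that is, shifting indices and multiplying by $t$ and $t^2$. If one wishes to treat $G$ as a genuine convergent function of $t$ rather than a formal power series, I would first fix $\zeta \in [-2,2]$ and use the bound $|\mathrm{VL}_m(\zeta)|=|2\cos(m\delta)|\le 2$ coming from the Definition to guarantee absolute convergence of $G$ for $|t|<1$, which then justifies the rearrangements on that disc, with the resulting rational identity extending by analytic continuation. As an independent check, and indeed an alternative proof, I would substitute $\zeta=2\cos\delta$ and $\mathrm{VL}_m(\zeta)=e^{im\delta}+e^{-im\delta}$ and sum the two geometric series $\sum_m (e^{i\delta}t)^m$ and $\sum_m (e^{-i\delta}t)^m$ in closed form; combining the two fractions over the common denominator $(1-e^{i\delta}t)(1-e^{-i\delta}t)=1-2\cos\delta\,t+t^2=1-\zeta t+t^2$ reproduces the numerator $2-\zeta t$ and confirms the result.
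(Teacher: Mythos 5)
Your proof is correct and takes essentially the same route as the paper: both arguments use the three-term recurrence of Proposition \ref{p2.1} with the initial values $\mathrm{VL}_0(\zeta)=2$, $\mathrm{VL}_1(\zeta)=\zeta$ to establish $(1-\zeta t+t^2)\sum_{m\ge 0}\mathrm{VL}_m(\zeta)t^m = 2-\zeta t$, the paper doing so by substituting the recurrence into the tail of the series and solving for the sum, which is the same algebra as your coefficient-of-$t^m$ extraction. Your additional justification of convergence via $|\mathrm{VL}_m(\zeta)|\le 2$ for $|t|<1$, and the geometric-series check with $\zeta=2\cos\delta$, are sound refinements that the paper (which manipulates the series formally) omits.
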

\begin{proof}
Since, 
\begin{equation}
    \nonumber
   \rm{ \sum_{m=0}^{\infty} \rm{VL}_{m}(\zeta) t^m = \rm{VL}_{0}(\zeta) t^0 + \rm{VL}_{1}(\zeta) t^1 + \sum_{m=2}^{\infty} \rm{VL}_{m}(\zeta) t^m}.
\end{equation}
Therefore,
\begin{equation*}
\begin{split}
    \sum_{m=0}^{\infty} \rm{VL_{m}(\zeta) t^{m}} &= \rm{2 +\zeta t} + \sum_{m=2}^{\infty} \rm{[\zeta \rm{VL}_{m-1}(\zeta) - \rm{VL}_{m-2}(\zeta)] t^m},\\
   \implies \sum_{m=0}^{\infty} \rm{VL_{m}(\zeta) t^m} &= \rm{ 2 + \zeta t +\zeta} \sum_{m=1}^{\infty} \rm{VL_{m}(\zeta) t^{m+1}} - \sum_{m=0}^{\infty} \rm{VL_{m}(\zeta) t^{m+2}},\\
  \implies \sum_{m=0}^{\infty} \rm{VL_{m}(\zeta) t^m} &=  \rm{ 2 + \zeta t +\zeta t} [\sum_{m=0}^{\infty} \rm{VL_{m}(\zeta) t^{m} - \rm{VL}_{0}(\zeta)] - t^2} \sum_{m=0}^{\infty} \rm{VL_{m}(\zeta) t^{m}}, \\
    \implies \sum_{m=0}^{\infty} \rm{VL_{m}(\zeta) t^m}  &= \rm{ 2 - \zeta t + (\zeta t   - t^2)} \sum_{m=0}^{\infty} \rm{VL_{m}(\zeta) t^{m}},\\
    \implies \sum_{m=0}^{\infty} \rm{VL_{m}(\zeta) t^m} & = \rm{\frac{2 - \zeta t }{1 - \zeta t + t^2}}.
\end{split}
\end{equation*}

\end{proof}

\begin{proposition}\label{p2.4}
The Vieta-Lucas differential equation is defined as \cite{izadi2021application}:
\begin{equation} \label{2a}
   \rm{ (4 - \zeta^{2})\diff[2]Y\zeta -\zeta \diff Y\zeta+m^2Y = 0}, ~~ m \in \mathbb{N}.
\end{equation}
\end{proposition}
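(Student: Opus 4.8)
The plan is to exploit the trigonometric definition $VL_m(\zeta) = 2\cos(m\delta)$ with $\delta = \arccos(\zeta/2)$, i.e. $\zeta = 2\cos\delta$, and to transform the differential operator in \eqref{2a} from the variable $\zeta$ to the variable $\delta$. The motivation is that $Y = 2\cos(m\delta)$ trivially satisfies the harmonic equation $\frac{d^2 Y}{d\delta^2} = -m^2 Y$, so if the operator $(4-\zeta^2)\frac{d^2}{d\zeta^2} - \zeta\frac{d}{d\zeta}$ collapses to $\frac{d^2}{d\delta^2}$ under this substitution, the result follows immediately.

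First I would record the elementary identities needed for the change of variables: from $\zeta = 2\cos\delta$ we obtain $4 - \zeta^2 = 4\sin^2\delta$ and $\frac{d\delta}{d\zeta} = -\frac{1}{2\sin\delta}$ (valid for $\delta \in (0,\pi)$, i.e. $|\zeta| < 2$). Applying the chain rule gives $\frac{dY}{d\zeta} = -\frac{1}{2\sin\delta}\frac{dY}{d\delta}$, and differentiating once more (being careful to differentiate the factor $1/\sin\delta$ as well) yields $\frac{d^2Y}{d\zeta^2} = \frac{1}{4\sin^2\delta}\frac{d^2Y}{d\delta^2} - \frac{\cos\delta}{4\sin^3\delta}\frac{dY}{d\delta}$.

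Next I would substitute these expressions into the left-hand side of \eqref{2a}. The term $(4-\zeta^2)\frac{d^2Y}{d\zeta^2} = 4\sin^2\delta\cdot\frac{d^2Y}{d\zeta^2}$ simplifies to $\frac{d^2Y}{d\delta^2} - \frac{\cos\delta}{\sin\delta}\frac{dY}{d\delta}$, while $-\zeta\frac{dY}{d\zeta} = \frac{\cos\delta}{\sin\delta}\frac{dY}{d\delta}$. The two first-derivative terms cancel exactly, leaving $(4-\zeta^2)\frac{d^2Y}{d\zeta^2} - \zeta\frac{dY}{d\zeta} = \frac{d^2Y}{d\delta^2}$. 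Since $Y = 2\cos(m\delta)$ gives $\frac{d^2Y}{d\delta^2} = -m^2 Y$, adding $m^2 Y$ produces zero, which establishes the equation.

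The main obstacle is purely computational bookkeeping in the second derivative: the chain rule for $\frac{d^2}{d\zeta^2}$ generates the extra term $-\frac{\cos\delta}{4\sin^3\delta}\frac{dY}{d\delta}$, and one must track it carefully to see the clean cancellation against $-\zeta\frac{dY}{d\zeta}$. A secondary point worth a remark is that the substitution is a priori valid only on the open interval $|\zeta| < 2$ where $\sin\delta \neq 0$; since both sides of \eqref{2a} are polynomials in $\zeta$ (by Proposition \ref{p2.2}), the identity extends to all $\zeta$, including the endpoints, by the identity theorem for polynomials. Alternatively, one could bypass the trigonometric route and verify \eqref{2a} by substituting the explicit power-series form of Proposition \ref{p2.2} and matching coefficients, but that computation is considerably more tedious, so the trigonometric substitution is the cleaner path.
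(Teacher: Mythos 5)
Your verification is correct, but there is nothing in the paper to compare it against: the paper states this proposition with a citation to the literature and gives no proof at all, treating the equation essentially as the \emph{definition} of the Vieta-Lucas differential equation; its content (that $\rm{VL}_{m}(\zeta)$ satisfies it) is then invoked without justification in the subsequent proof of the Rodrigues formula, where the auxiliary function $\rm{g}(\zeta)$ is matched against this ODE. What you supply is therefore a genuine proof that the paper omits. Your route is the standard one — it is exactly the classical verification of the Chebyshev equation $(1-x^{2})y''-xy'+m^{2}y=0$ transported by the rescaling $\zeta = 2x$ — and the computations check out: from $\zeta = 2\cos\delta$ one gets $4-\zeta^{2}=4\sin^{2}\delta$, $\frac{dY}{d\zeta}=-\frac{1}{2\sin\delta}\frac{dY}{d\delta}$, and $\frac{d^{2}Y}{d\zeta^{2}}=\frac{1}{4\sin^{2}\delta}\frac{d^{2}Y}{d\delta^{2}}-\frac{\cos\delta}{4\sin^{3}\delta}\frac{dY}{d\delta}$, so that $(4-\zeta^{2})\frac{d^{2}Y}{d\zeta^{2}}-\zeta\frac{dY}{d\zeta}$ collapses to $\frac{d^{2}Y}{d\delta^{2}}$, which equals $-m^{2}Y$ for $Y=2\cos(m\delta)$. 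Your closing remark is also a point worth keeping rather than discarding as pedantry: the substitution is only valid for $|\zeta|<2$ where $\sin\delta\neq 0$, and the extension to the closed interval legitimately needs the observation that the left-hand side of the equation is a polynomial vanishing on $(-2,2)$, hence identically zero. One caveat if this were to be spliced into the paper: the Rodrigues-formula proof that relies on this proposition actually needs a slightly stronger fact — that a degree-$m$ polynomial solution of this ODE is unique up to a scalar multiple — which neither your argument nor the paper establishes; that gap belongs to the Rodrigues theorem, not to your proof of the statement as written.
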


\begin{theorem}
The Rodrigues' formula for Vieta-Lucas polynomials $\rm{VL}_{m}(\zeta)$ can be obtained as: 
\begin{equation}
    \rm{VL_{m}(\zeta) =  (-1)^m\:  2\, \frac{m!}{(2m)!} (4 - {\zeta}^2)^{\frac{1}{2}}\frac{d^m}{d\zeta^m}\Big\{(4 - {\zeta}^2)^{m-\frac{1}{2}}\Big\}}.
\end{equation}
\end{theorem}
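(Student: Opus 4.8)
The plan is to recognise the claimed right-hand side as the classical Rodrigues operator attached to the very weight that makes the $VL_m$ orthogonal, and then to pin down the multiplicative constant by comparing top-degree coefficients. Throughout set $R_m(\zeta):=(4-\zeta^2)^{1/2}\,\frac{d^m}{d\zeta^m}\{(4-\zeta^2)^{m-1/2}\}$; the goal is to show $VL_m=(-1)^m\,2\,\frac{m!}{(2m)!}\,R_m$.

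First I would show that $R_m$ is a polynomial of degree exactly $m$. Factoring $(4-\zeta^2)^{m-1/2}=(2-\zeta)^{m-1/2}(2+\zeta)^{m-1/2}$ and applying the Leibniz rule to the $m$-th derivative, the generic summand carries a factor $(2-\zeta)^{m-k-1/2}(2+\zeta)^{k-1/2}$; after multiplication by $(4-\zeta^2)^{1/2}=(2-\zeta)^{1/2}(2+\zeta)^{1/2}$ each term becomes $(2-\zeta)^{m-k}(2+\zeta)^{k}$, a genuine polynomial of degree $m$. This single computation settles polynomiality and also exposes the leading coefficient that I will need at the end.

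Next I would establish that $R_m$ is orthogonal, with respect to the weight $w(\zeta)=(4-\zeta^2)^{-1/2}$ on $[-2,2]$ from the orthogonality property above, to every monomial $\zeta^{j}$ with $j<m$. Since $(4-\zeta^2)^{1/2}w(\zeta)=1$, one has $\langle R_m,\zeta^{j}\rangle_{w}=\int_{-2}^{2}\zeta^{j}\,\frac{d^m}{d\zeta^m}\{(4-\zeta^2)^{m-1/2}\}\,d\zeta$. Integrating by parts $m$ times annihilates the integral when $j<m$ because $\frac{d^m}{d\zeta^m}\zeta^{j}=0$, provided the boundary terms vanish; they do, since every derivative of order $\le m-1$ of $(4-\zeta^2)^{m-1/2}$ retains a factor $(4-\zeta^2)^{s}$ with $s\ge \tfrac12$ and therefore vanishes at $\zeta=\pm2$. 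As $\{VL_0,\dots,VL_{m-1}\}$ is an orthogonal basis of the space of polynomials of degree $<m$, this orthogonality forces $R_m=\lambda_m\,VL_m$ for a scalar $\lambda_m$.

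Finally I would fix $\lambda_m$ by matching leading coefficients. Proposition \ref{p2.2} shows $VL_m$ is monic for $m\ge1$, so $\lambda_m$ equals the leading coefficient $c_m$ of $R_m$. Reading $c_m$ off the Leibniz expansion (each $(2-\zeta)^{m-k}(2+\zeta)^k$ contributes $(-1)^{m-k}\zeta^m$ at top order) and invoking the Vandermonde identity for falling factorials, $\sum_{k=0}^{m}\binom{m}{k}(m-\tfrac12)^{\underline{k}}(m-\tfrac12)^{\underline{m-k}}=(2m-1)^{\underline{m}}$, yields $c_m=(-1)^m (2m-1)!/(m-1)!$. Hence $VL_m=c_m^{-1}R_m$, and the simplification $c_m^{-1}=(-1)^m(m-1)!/(2m-1)!=(-1)^m\,2\,m!/(2m)!$ gives precisely the stated constant. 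I expect the bookkeeping in this last step to be the main obstacle: arranging the half-integer falling factorials so that the Vandermonde collapse applies and then re-expressing the resulting constant in the $\frac{m!}{(2m)!}$ form; by contrast the orthogonality step is routine once the boundary terms are seen to vanish. As an independent cross-check (or a shorter alternative), one can instead invoke $VL_m(\zeta)=2T_m(\zeta/2)$, with $T_m$ the Chebyshev polynomial of the first kind, and transport the known Chebyshev Rodrigues formula through the substitution $x=\zeta/2$.
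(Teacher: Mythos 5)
Your proof is correct, and it establishes the key proportionality by a genuinely different mechanism than the paper. The paper's proof is differential: starting from $f(\zeta)=(4-\zeta^2)^{m-\frac12}$ it derives the first-order identity $(4-\zeta^2)f'(\zeta)+(2m-1)\zeta f(\zeta)=0$, differentiates it $(m+1)$ times, and substitutes $g(\zeta)=(4-\zeta^2)^{1/2}D^m f(\zeta)$ so as to recognize the Vieta--Lucas differential equation of Proposition \ref{p2.4}, from which $VL_m=C_m\,g$ is asserted. You instead argue via orthogonality: $R_m$ is a polynomial of degree $m$ (your Leibniz factorization into terms $(2-\zeta)^{m-k}(2+\zeta)^k$, which also appears in the paper's coefficient computation), it is $w$-orthogonal to every polynomial of degree $<m$ by $m$-fold integration by parts with vanishing boundary terms, and hence it is a scalar multiple of $VL_m$ by the uniqueness, up to scale, of the degree-$m$ member of an orthogonal polynomial sequence. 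After that, the two proofs fix the constant identically: read off the leading coefficient from the Leibniz expansion, collapse the sum by Vandermonde's convolution, and obtain $C_m=(-1)^m\,2\,m!/(2m)!$. Your route is arguably tighter at the one place the paper is informal: a second-order ODE has a two-dimensional solution space, so the paper's step ``we can choose $VL_m=C_m g$'' tacitly uses that the polynomial solutions of the Vieta--Lucas equation of degree $m$ form a one-dimensional family, whereas your argument makes the proportionality immediate from the orthogonality relations already stated in the paper. What the paper's route buys in exchange is that it needs no integration and stays entirely within differential identities. One small wording caveat in your write-up: the boundary terms vanish because each Leibniz term of $D^r\big[(4-\zeta^2)^{m-\frac12}\big]$ with $r\le m-1$ carries factors $(2-\zeta)^a(2+\zeta)^b$ with $a,b\ge\tfrac12$, i.e.\ a factor $(4-\zeta^2)^{1/2}$ times a continuous function, rather than literally a single power $(4-\zeta^2)^s$; the conclusion is unaffected. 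Your proposed Chebyshev shortcut $VL_m(\zeta)=2T_m(\zeta/2)$ would also work, but it imports an external result, which the paper deliberately avoids by keeping the derivation self-contained.
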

\begin{proof}
Consider
\begin{equation}
    \nonumber
   \rm{ f(\zeta) = (4 - {\zeta}^2)^{m-\frac{1}{2}}}.
\end{equation}
On differentiating, we obtain
\begin{equation}
\nonumber
   \rm{ f'(\zeta)= - (2m-1) \zeta (4 - {\zeta}^2)^{m-\frac{3}{2}}},
\end{equation}
\begin{equation}
\nonumber
\Rightarrow   \rm{ (4 - {\zeta}^2) f'(\zeta) + (2m-1) \zeta f(\zeta) = 0}.
\end{equation}
On differentiating it (m+1) times, we have
\begin{equation}
  \label{eq2.1}
 \rm{   (4 - {\zeta}^2)\: D^{m+2}f(\zeta) - 3\, \zeta \:D^{m+1}f(\zeta) +(m^2 - 1)\: D^m f(\zeta) = 0, }   ~~~~~ where~  D \equiv \frac{d}{d\zeta}.
\end{equation}
 Let $\rm{ g(\zeta) = (4 - {\zeta}^2)^{\frac{1}{2}}\: D^{m}f(\zeta)\:}$, then \eqref{eq2.1} reduces to
\begin{equation} \nonumber
   \rm{ (4 - \zeta^{2})\, g''(\zeta) - \zeta\,g'(\zeta)+m^2 g(\zeta) = 0},
\end{equation}
which is equivalent to Vieta-Lucas differential equation. Thus, we can choose \:$\rm{VL}_{m}(\zeta) = C_{m}\,g(\zeta)$,\: where $\rm C_{m}$ is a constant to be determined. To find $\rm C_{m}$, the coefficients of $\rm{\zeta^m}$ in $\rm{VL}_{m}(\zeta)$ and $\rm{g(\zeta)}$ are required to be compared.
The coefficient of $\rm{\zeta^m}$ in $\rm{VL}_{m}(\zeta)$ is 1.\\
Since,
\begin{equation*}
\begin{split}
\rm{ g(\zeta) }&= \rm{(4 - {\zeta}^2)^{\frac{1}{2}}\: D^{m}\,(4 - {\zeta}^2)^{m-\frac{1}{2}}}, \\
 & = \rm{(4 - {\zeta}^2)^{\frac{1}{2}}\: \sum_{j=0}^{m} \binom{m}{j}\: D^{m-j}\,(2+\zeta)^{m-\frac{1}{2}}\:D^j\,(2- \zeta)^{m-\frac{1}{2}}},~~\text{ (by Leibniz's rule)}\\
 & =\rm{(-1)^m\: m!\: \sum_{j=0}^{m} \binom{m-\frac{1}{2}}{j} \binom{m-\frac{1}{2}}{m-j} (\zeta-2)^{m-j}\:(\zeta + 2)^j}.
\end{split}
\end{equation*}
Therefore, the coefficient of $\rm{\zeta^m}$ in $\rm{g(\zeta)}$ is
\begin{equation*}
      \rm{(-1)^m\: m!\: \sum_{j=0}^{m} \binom{m-\frac{1}{2}}{j} \binom{m-\frac{1}{2}}{m-j}}.
\end{equation*}
The Vandermonde's convolution modifies the above coefficient of $\rm{\zeta^m}$ to
\begin{equation*}
\label{2.3}
  \rm{ \frac{(-1)^m}{2}\frac{(2m)!}{m!}}.    
\end{equation*}
Thus, $\rm{C_{m} = (-1)^m\:2\; \frac{m!}{(2m)!}}$.
Hence the Rodrigues' formula.
\end{proof}

\begin{remark} \label{remark1}
The zeroes and extreme points of $\rm{VL}_{m}(\zeta)$ in $[-2,2]$ are respectively presented as
$\zeta_j = 2 \cos{\left((j-\frac{1}{2})\frac{\pi}{m}\right)}$ , $\zeta_j=2\cos{\left(j\frac{\pi}{m}\right)}$,\;where j=1,2,3,\dots, m \cite{horadam2002vieta}.
\end{remark}

\begin{proposition}
The function $\zeta^{m}$ can be expressed in terms of $\rm{VL}_{m}(\zeta)$  as:
\begin{equation}
   \rm{\zeta^m} =  \sum_{j=0}^{\lfloor m/2 \rfloor^{\star}} {m \choose j} \rm{VL}_{m-2j}(\zeta),
    \end{equation}
    where '${\star}$' denotes that the last term in the sum is to be halved when m is even.
    \end{proposition}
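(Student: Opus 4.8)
The plan is to exploit the trigonometric definition $VL_m(\zeta) = 2\cos(m\delta)$ with $\delta = \arccos(\zeta/2)$, which immediately suggests writing $\zeta = 2\cos\delta = e^{i\delta} + e^{-i\delta}$. With this substitution the right-hand side becomes transparent: since $VL_{m-2j}(\zeta) = 2\cos((m-2j)\delta) = e^{i(m-2j)\delta} + e^{-i(m-2j)\delta}$, the claimed expansion is really an identity about cosines. I would first establish it on the interval $|\zeta| \le 2$, where this representation is valid, and then observe that both sides are polynomials in $\zeta$, so the identity theorem for polynomials extends it to all $\zeta$.

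First I would apply the binomial theorem to obtain $\zeta^m = (e^{i\delta} + e^{-i\delta})^m = \sum_{j=0}^{m} \binom{m}{j} e^{i(m-2j)\delta}$. The key manipulation is to fold this sum about its center by pairing the $j$-th and $(m-j)$-th terms. Using $\binom{m}{m-j} = \binom{m}{j}$, such a pair contributes $\binom{m}{j}\big(e^{i(m-2j)\delta} + e^{-i(m-2j)\delta}\big) = 2\binom{m}{j}\cos((m-2j)\delta) = \binom{m}{j}\, VL_{m-2j}(\zeta)$. Letting $j$ run from $0$ to $\lfloor m/2 \rfloor$ then reconstitutes every term of the original binomial sum.

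The one point requiring care---and the reason for the starred summation---is the parity of $m$. When $m$ is odd there is no fixed point under $j \mapsto m-j$, so every original term belongs to exactly one pair and the sum up to $\lfloor m/2 \rfloor = (m-1)/2$ is exact. When $m$ is even, the central index $j = m/2$ is self-paired and contributes the single term $\binom{m}{m/2}\, e^{0} = \binom{m}{m/2}$, which the folding procedure would otherwise count twice; since $VL_0(\zeta) = 2$ this term equals $\tfrac12 \binom{m}{m/2}\, VL_0(\zeta)$, precisely the halving of the last term encoded by the symbol $\star$. I expect this bookkeeping in the even case to be the only genuine obstacle; everything else is a direct binomial computation. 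An alternative route by induction on $m$ via the recurrence $VL_m(\zeta) = \zeta\, VL_{m-1}(\zeta) - VL_{m-2}(\zeta)$ of Proposition~\ref{p2.1} is also available, but it forces one to track the two parity classes separately throughout, so I would favor the cleaner trigonometric argument.
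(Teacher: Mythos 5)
Your proposal is correct and follows essentially the same route as the paper: writing $\zeta = 2\cos\delta = e^{i\delta}+e^{-i\delta}$, expanding by the binomial theorem, and pairing the $j$-th and $(m-j)$-th terms into $2\binom{m}{j}\cos((m-2j)\delta) = \binom{m}{j}\,\mathrm{VL}_{m-2j}(\zeta)$. If anything, your treatment is slightly more careful than the paper's, which leaves the pairing implicit behind ellipses and never spells out why the central term for even $m$ must be halved, whereas you justify the $\star$ convention explicitly via the self-paired index $j = m/2$ and $\mathrm{VL}_0(\zeta)=2$.
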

 \begin{proof}
 Since, 
 \begin{equation}
    \zeta^{m} = (2 \cos{\delta})^{m} = (e^{i \delta}+e^{-i \delta})^{m}.
 \end{equation}
Therefore, the use of binomial expansion gives
 \begin{equation*}
     \begin{split}
     \zeta^{m} &=  e^{i m \delta } + {m \choose 1} e^{i(m-2)\delta} +\dots+{m \choose m-1} e^{-i(m-2)\delta}+ e^{-i m \delta },\\
 &= ( e^{i m \delta }+ e^{-i m \delta }) + {m \choose 1}( e^{i (m-2) \delta }+ e^{-i (m-2) \delta }) +\dots,\\   
 &=  2 \cos{m \delta} + {m \choose 1} 2 \cos{(m-2) \delta} + {m \choose 2} 2 \cos{(m-4) \delta} + \dots,\\
     &= \rm{VL}_{m}(\zeta)  + {m \choose 1} \rm{VL}_{m-2}(\zeta) + {m \choose 2} \rm{VL}_{m-4}(\zeta) + \dots.
     \end{split}
 \end{equation*}

 Hence the result.
\end{proof}

\section{Shifted Vieta-Lucas polynomials} 
\label{sec3}

\begin{definition}
The shifted Vieta-Lucas polynomials defined over $[0,2]$ are denoted by $\rm{VL^{*}_{m}(\zeta)}$ of degree $\rm{m \in \mathbb{N} \cup \{0\}}$ as \cite{heydari2021vieta}:
\begin{equation}
 \rm{VL^{*}_{m}(\zeta) = VL_{m}(2 \zeta - 2 )}.
\end{equation}
\end{definition}
The recurrence relation for shifted Vieta-Lucas polynomials $\rm{VL^{*}_{m}(\zeta)}$ is \cite{heydari2021vieta}:
\begin{equation}
\rm{VL^{*}_{m}(\zeta) = (2 \zeta - 2) VL^{*}_{m-1}(\zeta) - VL^{*}_{m-2}(\zeta)}, 
\end{equation}
provided $\rm{VL^{*}_{0}(\zeta) = 2}$  and 
$\rm{VL^{*}_{1}(\zeta) = 2 \zeta - 2}.$ \\
The shifted Vieta-Lucas polynomials satisfy the following orthogonality property \cite{heydari2021vieta}:
\begin{equation}
    \rm{\langle VL^{*}_{n}(\zeta) ,  VL^{*}_{m}(\zeta) \rangle_{w^{*}(\zeta)} = \int_{0}^{2}  VL^{*}_{n}(\zeta) VL^{*}_{m}(\zeta) w^{*}(\zeta)   \,d\zeta =
    \begin{cases}
      4 \pi,   & n = m = 0,  \\ 
      2 \pi,   & n = m \neq 0,  \\ 
       0 ,     & n \neq m,
\end{cases}}
\end{equation}
where $\rm{w^{*}(\zeta)= \frac{1}{\sqrt{2 \zeta - \zeta^2}}}$ is the weight function of shifted Vieta-Lucas polynomials.

\section{Vieta-Lucas wavelets and function approximation}
\label{sec4}
 Wavelet is a type of function that is derived through the dilation and translation of the mother wavelet. The continuous wavelet family with dilation parameter $h$ and translation parameter $r$ is defined as \cite{ray2018wavelet}:
\begin{equation}
    \gamma_{h,r}(\zeta) = |h|^{-1/2} \gamma(\frac{\zeta-r}{h}), \hspace{0.5 cm} h,r \in \mathbb{R}, h \neq 0. 
\end{equation}
If $h =h_{0}^{-k}, r = s r_{0}h_{0}^{-k}, h_{0} > 1,r_{0} > 0$, then the discrete wavelet family consists of the following members:
\begin{equation}
    \gamma_{k,s}(\zeta) = |h_{0}|^{k/2} \gamma(h_{0} \zeta - s r_{0}), \hspace{0.5 cm} k,s \in \mathbb{Z}^{+},
\end{equation}
where $\gamma_{k,s}(x)$ constitutes the wavelet basis for $L_{2}(\mathbb{R})$. For a specific choice of $h_{0} = 2$ and $r_{0} = 1$, $\gamma_{k,s}(x)$ constitutes an orthonormal basis. 
\begin{definition}
The Vieta-Lucas wavelets $ \rm{\Upsilon_{s,m}(\zeta) =\Upsilon(k,s,m,\zeta) }$ is defined on the interval [0,2) as: 
\begin{equation}\label{4a}
 \rm{\Upsilon_{s,m}}(\zeta) =
    \begin{cases}
      \rm{ 2^{\frac{k}{2}}\:\widehat{\rm{VL}}_{m}(\:2^k \zeta - \Hat{s}\:), }  & \rm{\frac{\Hat{s}-2}{2^k} \leq \zeta < \frac{\Hat{s}+2}{2^k}} ,  \\ 
       0 ,  & \text{Otherwise},
\end{cases}
\end{equation}
where 
\begin{equation}
    \widehat{\rm{VL}}_{m}(\zeta) = 
\begin{cases}
      \frac{1}{\sqrt{\pi}},   &  m=0, \\ 
        \frac{1}{\sqrt{2 \pi}} \rm{VL_{m}}(\zeta),  & m\geq 1,
\end{cases}
\end{equation}
 $m = 0,1,2,\dots,M-1;$ M be the maximum order of Vieta-Lucas polynomials, $s = 1,2,\dots,2^{k-1}$; $k = 1,2,3,\dots;$ $\hat{s}=2(2s-1)$.
\end{definition}

\begin{remark}
Vieta-Lucas wavelets form an orthogonal set with respect to the weight functions $\rm{w_{s}(\zeta) = w(2^{k} \zeta - \Hat{s}) }$. 
\end{remark}
 
 \begin{definition}
 A function $\rm{Y(\zeta)}$ defined over ${L^{2}}_{w_{s}}[0,2]$ can be written in terms of Vieta-Lucas wavelets series as:  
\begin{equation}\label{5a}
   \rm{ Y(\zeta)= \sum_{s=1}^{\infty} \sum_{m=0}^{\infty} \Lambda_{s,m} \Upsilon_{s,m}(\zeta)},
\end{equation}
with
\begin{equation}\label{s1}
   \rm{  \Lambda_{s,m} = \langle Y(\zeta),\Upsilon_{s,m}(\zeta)\rangle_{ w_{s}(\zeta) } = \int_{0}^{2} Y(\zeta) \Upsilon_{s,m}(\zeta) w_{s}(\zeta) \,d\zeta},
\end{equation}
where $\langle * ,* \rangle$ denotes the inner product.
 \end{definition}

The truncated form of Vieta-Lucas wavelet series can be written as:
\begin{equation}
  \rm{  \bar{Y}(\zeta)} \cong \rm{\sum_{s=1}^{2^{k-1}} \sum_{m=0}^{M-1} \Lambda_{s,m} \Upsilon_{s,m}(\zeta) = \rm{\Lambda^{T} \Upsilon(\zeta)}},
\end{equation}
where $\Lambda$ and $\Upsilon(\zeta)$ are $ \eta \times 1$ matrices  in the following form for $\eta = 2^{k-1}M$, and
\begin{align}
  \rm{   \Lambda} &=\rm{ [ \Lambda_{1,0}, \Lambda_{1,1},\dots, \Lambda_{1,M-1}, \Lambda_{2,0},\dots, \Lambda_{2,M-1},\dots, \Lambda_{2^{k-1},0},\dots, \Lambda_{2^{k-1},M-1}]^{T}},\\
   \label{eq4zi}  \rm{ \Upsilon(\zeta)} &= \rm{[\Upsilon_{1,0}(\zeta),\dots,\Upsilon_{1,M-1}(\zeta),\Upsilon_{2,0}(\zeta),\dots,\Upsilon_{2,M-1}(\zeta),\dots,\Upsilon_{2^{k-1},0}(\zeta),\dots,\Upsilon_{2^{k-1},M-1}(\zeta)]^{T}}.
\end{align}
\section{Vieta-Lucas wavelet based operational matrix} 
\label{sec5}
\begin{theorem}
Let $\Upsilon(\zeta)$ be the Vieta-Lucas wavelets vector defined in \eqref{eq4zi}. Then the derivative of the vector $\Upsilon(\zeta)$ can be expressed as:
\begin{equation}
    \frac{\rm{d\Upsilon(\zeta)}}{\rm{d\zeta}} = \rm{D \Upsilon(\zeta)},
\end{equation}
where $D$ is the $\eta \times \eta $ matrix given by \\
\begin{center} 
    $\rm{D} = \begin{pmatrix} 
   \rm{ F} & 0 & \dots & 0 \\
    0 & \rm{F} & \dots & 0 \\
    \vdots &\vdots & \ddots &\vdots \\
    0 & 0 & 0     & \rm{F} 
    \end{pmatrix}$\\
\end{center}

 in which $F$ is $M \times M$ square matrix whose $(u,v)^{th}$ element is defined as:
 \begin{equation}
  \rm{ F_{u,v}} =  \begin{cases}
    \rm{  \frac{2^{k} (u - 1)}{\sqrt{\alpha_{u-1}}\sqrt{\alpha_{v-1}}}} ,   & \rm{u = 2,\dots,M;}\hspace{0.1 cm} \rm{ v = 1,2,\dots,u-1;}\hspace{0.1 cm} \rm{(u+v)}=\text{odd} ,  \\ 
       0 ,  & \text{Otherwise}.
    \end{cases}
\end{equation}
\end{theorem}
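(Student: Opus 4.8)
The plan is to reduce the matrix identity to a single local derivative formula for the Vieta-Lucas polynomials and then propagate it through the wavelet dilation/translation. First I would exploit the support structure. For a fixed level $k$, the basis function $\Upsilon_{s,m}$ is supported on $[(\hat{s}-2)/2^k,(\hat{s}+2)/2^k)$, and as $s$ ranges over $1,\dots,2^{k-1}$ these intervals are pairwise disjoint and tile $[0,2)$. Since differentiation is local, $\frac{d}{d\zeta}\Upsilon_{s,m}$ can only involve the functions $\Upsilon_{s,j}$ carrying the \emph{same} translation index $s$; this already forces $D$ to be block diagonal. Moreover the scaling computation below depends only on $k$ (which is fixed) and never on $s$, so every diagonal block is one and the same $M\times M$ matrix $F$. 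It then remains to identify the entries of $F$.

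Next I would differentiate a single wavelet. Writing $\eta=2^k\zeta-\hat{s}$ on the support and applying the chain rule gives
\[
\frac{d}{d\zeta}\Upsilon_{s,m}(\zeta)=2^{3k/2}\,\widehat{\mathrm{VL}}_m{}'(\eta),
\]
so everything reduces to expanding $\widehat{\mathrm{VL}}_m{}'$ in the normalized basis $\{\widehat{\mathrm{VL}}_j\}$. The heart of the argument is the polynomial identity
\[
\mathrm{VL}_m'(\zeta)=m\sum_{\substack{0\le j\le m-1\\ m+j\ \mathrm{odd}}}{}'\,\mathrm{VL}_j(\zeta),
\]
where the prime means the $j=0$ term is taken with weight $\tfrac12$; equivalently $d_{mj}=m$ for $j\ge1$ and $d_{m0}=m/2$. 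I would prove this from the trigonometric form $\mathrm{VL}_m(\zeta)=2\cos(m\delta)$, $\delta=\arccos(\zeta/2)$: differentiating yields $\mathrm{VL}_m'(\zeta)=m\sin(m\delta)/\sin\delta$, and the telescoping relation $2\sin\delta\cos(j\delta)=\sin((j{+}1)\delta)-\sin((j{-}1)\delta)$, summed over $j\equiv m-1\pmod 2$, collapses to $\sin(m\delta)$. This gives $\sin(m\delta)/\sin\delta=2\sum'\cos(j\delta)$, and using $2\cos(j\delta)=\mathrm{VL}_j(\zeta)$ produces the identity (the halving of the $j=0$ term coming from the boundary of the telescope).

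Finally I would carry the normalizations. The constants $\alpha_m$ in the statement are $\alpha_0=2$ and $\alpha_m=1$ for $m\ge1$; these satisfy $\widehat{\mathrm{VL}}_m=(2\pi\,\alpha_m)^{-1/2}\mathrm{VL}_m$ (with $\alpha_0=2$ absorbing the anomaly $\mathrm{VL}_0=2$), which is checked directly against the definition of $\widehat{\mathrm{VL}}_m$. Substituting the polynomial identity, rewriting $\mathrm{VL}_j=(2\pi\alpha_j)^{1/2}\widehat{\mathrm{VL}}_j$, and noting that the common factor $(2\pi)^{1/2}$ cancels, the coefficient of $\widehat{\mathrm{VL}}_j$ in $\widehat{\mathrm{VL}}_m{}'$ is $d_{mj}\sqrt{\alpha_j/\alpha_m}$. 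Reverting $\widehat{\mathrm{VL}}_j(\eta)=2^{-k/2}\Upsilon_{s,j}(\zeta)$ supplies the overall factor $2^{3k/2}\cdot2^{-k/2}=2^k$, so with $u=m+1$, $v=j+1$ one obtains $F_{u,v}=2^k\,d_{u-1,v-1}\sqrt{\alpha_{v-1}/\alpha_{u-1}}$ for $u+v$ odd and $v\le u-1$. The step I expect to be the main obstacle is matching this asymmetric-looking expression to the symmetric $2^k(u-1)/(\sqrt{\alpha_{u-1}}\sqrt{\alpha_{v-1}})$ of the statement; this is exactly the arithmetic identity $d_{mj}\,\alpha_j=m$, valid because $d_{mj}=m,\ \alpha_j=1$ for $j\ge1$, while for $j=0$ the weight $\tfrac12$ combines with $\alpha_0=2$ to give $(m/2)\cdot2=m$. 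Hence $d_{mj}\sqrt{\alpha_j/\alpha_m}=m/\sqrt{\alpha_m\alpha_j}$, which is precisely the claimed entry, and the identification of $F$ (and therefore of the block-diagonal $D$) is complete.
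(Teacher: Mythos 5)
Your proof is correct, and its skeleton matches the paper's: disjoint supports force block-diagonality, and the entries of $F$ come from expanding the derivative of a Vieta--Lucas polynomial back in the Vieta--Lucas basis and tracking normalizations. The genuine difference is in how the key lemma is handled. The paper works with the \emph{shifted} polynomials $\mathrm{VL}^{*}_{m}(2^{k-1}\zeta-2s+2)$ and simply \emph{asserts} the derivative expansion (its equation \eqref{6d}, $\frac{d}{d\zeta}\mathrm{VL}^{*}_{m}(\zeta)=2\sum_{j+m\,\mathrm{odd}}\frac{m}{\alpha_{j}}\mathrm{VL}^{*}_{j}(\zeta)$) without proof, and it never defines the constants $\alpha_{m}$ at all. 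You instead work with the unshifted trigonometric form $\mathrm{VL}_{m}(\zeta)=2\cos(m\delta)$ and actually \emph{prove} the expansion via $\mathrm{VL}_{m}'(\zeta)=m\sin(m\delta)/\sin\delta$ and the telescoping identity $2\sin\delta\cos(j\delta)=\sin((j{+}1)\delta)-\sin((j{-}1)\delta)$, with the halved $j=0$ term emerging from the boundary of the telescope; you also pin down $\alpha_{0}=2$, $\alpha_{m}=1$ $(m\ge 1)$ and verify that your coefficients reproduce the paper's (6d). What your route buys is self-containedness: the one nontrivial analytic fact in the whole theorem is established rather than cited, and the final reconciliation $d_{mj}\alpha_{j}=m$ explains \emph{why} the seemingly asymmetric coefficient collapses to the symmetric entry $2^{k}(u-1)/(\sqrt{\alpha_{u-1}}\sqrt{\alpha_{v-1}})$ --- a point the paper's derivation obscures by never stating what $\alpha_{j}$ is. What the paper's route buys is only brevity, since the shifted-polynomial bookkeeping (chain-rule factor $2^{3k/2-1}$ times the built-in factor $2$ of (6d)) lands on the same $2^{k}$ you obtain directly.
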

\begin{proof}
Using shifted Vieta-Lucas polynomials vector, $\Upsilon(\zeta)$ can be rewritten as
\begin{equation} \label{6b}
   \rm{ \Upsilon_{u}(\zeta)= \Upsilon_{s,m}(\zeta)= 2^{\frac{k}{2}} \sqrt{\frac{1}{2 \pi \alpha_{m}}}VL^{*}_{m}(2^{k-1}\zeta-2s+2) \chi_{[\frac{s-1}{2^{k-2}},\frac{s}{2^{k-2}}]}},
\end{equation}
where $\rm{u = (s-1) M + (m+1)}$; $ \rm{s = 1,2,3,....2^{k-1}}$, $\rm{m = 0,1,2,....M-1}$ and 
\begin{equation}
    \rm{\chi_{[\frac{s-1}{2^{k-2}},\frac{s}{2^{k-2}}]}} = 
\begin{cases}
      1,   &\rm{ \zeta \in [\frac{s-1}{2^{k-2}},\frac{s}{2^{k-2}}]} ,  \\ 
       0 ,  & \text{Otherwise}.
\end{cases}
\end{equation}
On differentiating \eqref{6b} with respect to $\rm{\zeta}$, we obtain
\begin{equation} \label{6c}
   \rm{ \frac{d\Upsilon_{u}(\zeta)}{d\zeta}=  2^{\frac{3k}{2}-1} \sqrt{\frac{1}{2 \pi \alpha_{m}}}\frac{d}{d\zeta}VL^{*}_{m}(2^{k-1}\zeta-2s+2) \chi_{[\frac{s-1}{2^{k-2}},\frac{s}{2^{k-2}}]}},
\end{equation}
since \eqref{6c} vanishes outside the interval $\rm{ \zeta \in [\frac{s-1}{2^{k-2}},\frac{s}{2^{k-2}}]}$. Thus, the nonzero components of Vieta-Lucas wavelets expansion exist only in the interval $\rm{ \zeta \in [\frac{s-1}{2^{k-2}},\frac{s}{2^{k-2}}]}$  i.e, $\rm{\Upsilon_{i}(\zeta)}$ for $i = (s-1) M + 1,(s-1) M + 2,\dots, s M.$ Vieta-Lucas wavelets expansion can now be written as
\begin{equation*}
     \rm{ \frac{d\Upsilon_{u}(\zeta)}{d\zeta} = \sum_{i = (s-1) M + 1}^{s M}  a_{i} \Upsilon_{i}(\zeta)}.
\end{equation*}
Here,
\begin{equation*}
     \rm{ \frac{d}{d\zeta}\Upsilon_{u}(\zeta) = 0, ~~~~ \text{for}\hspace{0.2 cm} u = 1, M+1, 2M+1, \dots,(2^{k-1} - 1)M+1}, ~~ \text{because} ~ \rm{ \frac{d}{d\zeta}VL_{0}(\zeta) = 0} ~ \text{everywhere.}
\end{equation*} 
So, the first row of matrix $F$ is zero.\\
Now, the first derivative of shifted Vieta-Lucas polynomials can be expressed as
\begin{equation} \label{6d}
   \rm{ \frac{d}{d\zeta} VL^{*}_{m}(\zeta) = 2 \sum_{\substack{j = 0\\j+m = odd}}^{M-1} \frac{m}{\alpha_{j}} VL^{*}_{j}(\zeta)}.
\end{equation}
Using \eqref{6d} in \eqref{6c}, we obtain
\begin{equation}
    \rm{ \frac{d\Upsilon_{u}(\zeta)}{d\zeta}} =  \rm{2^{\frac{3k}{2}} \sqrt{\frac{1}{2 \pi \alpha_{m}}} \sum_{\substack{j=0\\j+m = odd}}^{M-1} \rm{\frac{m}{\alpha_{j}} VL^{*}_{m}(2^{k-1}\zeta-2s+2) \chi_{[\frac{s-1}{2^{k-2}},\frac{s}{2^{k-2}}]}},}
\end{equation}
which can be rewritten as
\begin{equation*}
     \rm{ \frac{d\Upsilon_{u}(\zeta)}{d\zeta}  =  2^{k} \sum_{\substack{v = 1\\ u+v = odd}}^{u-1} \frac{u - 1}{\sqrt{\alpha_{u-1}}\sqrt{\alpha_{v-1}}} \Upsilon_{(s-1)M+v}(\zeta).}
\end{equation*}
Therefore
\begin{equation*}
    \rm{  \frac{d\Upsilon_{u}(\zeta)}{d\zeta} = F_{u,v} \Upsilon_{(s-1)M+v}(\zeta), }
\end{equation*}
with
\begin{equation}
  \rm{ F_{u,v}} =  \begin{cases}
    \rm{ 2^{k}  \frac{u - 1}{\sqrt{\alpha_{u-1}}\sqrt{\alpha_{v-1}}}} ,   &\rm{u = 2,\dots,M;\hspace{0.1 cm} v = 1,2,\dots,u-1;\hspace{0.1 cm} (u+v)= \text{odd} },  \\ 
       0 ,  & \text{Otherwise}.
    \end{cases}
\end{equation}
which leads to the desired expression.
\end{proof}
For example, if we select k=2 and M=3, then the discrete members of shifted Vieta-Lucas wavelets can be written as: 
\begin{align*}
\nonumber
  \rm{ \Upsilon_{1}(\zeta) = \Upsilon_{1,0}(\zeta)} = & \begin{cases}
    \rm{\frac{2}{ \sqrt{\pi}}} ,   & 0 \leq \zeta < 1 ,  \\ 
       0 ,  & \text{Otherwise}.
    \end{cases}\\ 
     \rm{ \Upsilon_{2}(\zeta) = \Upsilon_{1,1}(\zeta)} = &  \begin{cases}
    \rm{\frac{ 2 \sqrt{2}}{ \sqrt{\pi}} (2 \zeta - 1)},   & 0 \leq \zeta < 1 ,  \\ 
       0 ,  & \text{Otherwise}.
    \end{cases}\\
      \rm{ \Upsilon_{3}(\zeta) = \Upsilon_{1,2}(\zeta)} = & \begin{cases}
    \rm{\frac{ 2 \sqrt{2}}{ \sqrt{\pi}} (8 \zeta^{2}- 8 \zeta + 1)} ,   & 0 \leq \zeta < 1 ,  \\ 
       0 ,  & \text{Otherwise}.
    \end{cases} \\
     \rm{ \Upsilon_{4}(\zeta) = \Upsilon_{2,0}(\zeta)} = & \begin{cases}
    \rm{\frac{2}{ \sqrt{\pi}} },   & 1 \leq \zeta < 2 ,  \\ 
       0 ,  & \text{Otherwise}.
    \end{cases}
    \\ 
     \rm{ \Upsilon_{5}(\zeta) = \Upsilon_{2,1}(\zeta)} = &  \begin{cases}
    \rm{\frac{ 2 \sqrt{2}}{ \sqrt{\pi}} (2 \zeta - 3),}   & 1 \leq \zeta < 2 ,  \\ 
       0 ,  & \text{Otherwise}.
    \end{cases} \\
      \rm{ \Upsilon_{6}(\zeta) = \Upsilon_{2,2}(\zeta)} = & \begin{cases}
    \rm{\frac{ 2 \sqrt{2}}{ \sqrt{\pi}} (8 \zeta^{2}- 24 \zeta + 17)} ,   & 1 \leq \zeta < 2 ,  \\ 
       0 ,  & \text{Otherwise}.
    \end{cases} 
\end{align*}
As a result, the first order derivatives of the shifted Vieta-Lucas wavelets over the domain $[0,2]$ are:
\begin{align*}
  \rm{  \frac{d\Upsilon_{1}}{d\zeta}} & = 0,\\
   \rm{ \frac{d\Upsilon_{2}}{d\zeta}} & = \frac{ 4 \sqrt{2}}{ \sqrt{\pi}} = 2 \sqrt{2}~ \Upsilon_{1},\\
   \rm{ \frac{d\Upsilon_{3}}{d\zeta}} & = \frac{ 16 \sqrt{2}}{ \sqrt{\pi}} (2 \zeta - 1 ) = 8 ~\Upsilon_{2},\\
    \rm{  \frac{d\Upsilon_{4}}{d\zeta}} & = 0, \\
   \rm{ \frac{d\Upsilon_{5}}{d\zeta}} & = \frac{ 4 \sqrt{2}}{ \sqrt{\pi}} = 2 \sqrt{2}~ \Upsilon_{4},\\
   \rm{ \frac{d\Upsilon_{6}}{d\zeta}} & = \frac{ 16 \sqrt{2}}{ \sqrt{\pi}} (2 \zeta - 3 ) = 8 ~\Upsilon_{5}.
\end{align*}
So, the matrix D is as follows:
\begin{center} 
    $\rm{D} = 
    \begin{pmatrix} 
    0 & 0  & 0 & 0 & 0  & 0 \\
    2 \sqrt{2} & 0  & 0 & 0 & 0  & 0 \\
    0 & 8  & 0 & 0 & 0  & 0\\
   0 & 0  & 0 & 0 & 0  & 0 \\
   0 & 0  & 0 &  2 \sqrt{2} & 0  & 0 \\
    0 & 0  & 0 & 0 & 8  & 0\\
    \end{pmatrix}$.\\
\end{center}

\begin{corollary}
The $m^{th}$ order differential OM can be achieved as:
\begin{equation}\label{1st}
    \rm{\frac{d^{(m)}\Upsilon(\zeta)}{d\zeta^{(m)}}= D^{(m)}\Upsilon(\zeta)},
\end{equation}
and $D^{(m)}$ denotes the  $m^{th}$ order derivative of $D$.
\end{corollary}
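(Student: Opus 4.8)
The plan is to prove the statement by induction on the order $m$, taking the first-order result $\frac{d\Upsilon(\zeta)}{d\zeta} = D\Upsilon(\zeta)$ from the preceding theorem as the base case. Here $D^{(m)}$ should be read as the $m$-fold matrix product $D^m$ (the constant block-diagonal matrix $D$ raised to the power $m$) rather than a derivative of $D$ in the literal sense; indeed the entries of $D$ are scalars independent of $\zeta$, and this constancy is precisely the fact the argument exploits.

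First I would set up the induction. For $m=1$ the claim is exactly the theorem. For the inductive step, I would assume $\frac{d^{(m)}\Upsilon(\zeta)}{d\zeta^{(m)}} = D^m \Upsilon(\zeta)$ and differentiate both sides once more with respect to $\zeta$. Since $D$ is a constant matrix, it commutes with the differentiation operator, so
\[
\frac{d^{(m+1)}\Upsilon(\zeta)}{d\zeta^{(m+1)}} = \frac{d}{d\zeta}\bigl(D^m \Upsilon(\zeta)\bigr) = D^m \frac{d\Upsilon(\zeta)}{d\zeta} = D^m D\, \Upsilon(\zeta) = D^{m+1}\Upsilon(\zeta),
\]
which closes the induction and yields \eqref{1st}.

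The argument is essentially routine: the only point requiring care is the observation that $D$ has constant entries, so that it passes through the derivative without generating additional terms by the product rule. There is no genuine obstacle here, since the entire content of the corollary lies in iterating the first-order operational matrix relation. If one wishes to be fully explicit, one may also remark that the block-diagonal structure is preserved under powers, so that $D^m$ is again block diagonal with diagonal blocks $F^m$; this is immediate from the block form of $D$ exhibited in the theorem and requires no separate verification.
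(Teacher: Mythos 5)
Your proof is correct and coincides with the paper's (implicit) reasoning: the paper states this corollary with no proof at all, treating it as the immediate iteration of the first-order relation $\frac{d\Upsilon(\zeta)}{d\zeta}=D\Upsilon(\zeta)$, which is exactly what your induction formalizes. Your reading of $D^{(m)}$ as the matrix power $D^{m}$ --- rather than a literal $m$-th derivative of the constant matrix $D$, which would vanish --- is the correct interpretation of the paper's loose wording, and your observation that constancy of $D$ is what lets it pass through $\frac{d}{d\zeta}$ is precisely the point on which the corollary rests.
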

\section{Numerical Scheme}
\label{sec6}

Consider the most general appearance of second order singular differential equations as
\begin{equation}\label{7a}
\rm{ Y''(\zeta) + \frac{\mu}{\zeta}  Y'(\zeta) + f(\zeta, Y(\zeta)) = g(\zeta), \hspace{0.5 cm} 0\leq\zeta\leq L}.
\end{equation}
with 
  \begin{equation}\label{7b}
    \rm{ Y(\zeta)|_{\zeta=0} = \alpha_{0}, \:  Y'(\zeta)|_{\zeta=0} = \alpha_{1}},
 \end{equation}
  or 
 \begin{equation}\label{7c}
    \rm{ Y(\zeta)|_{\zeta=0} = \beta_{0}, \:  Y(\zeta)|_{\zeta=L} = \beta_{1}}.
 \end{equation}
 Let $\rm{\bar{Y}(\zeta)}$ be the Vieta-Lucas wavelet series approximation to the solution of equations \eqref{7a}-\eqref{7c}
 \begin{equation}\label{7d}
    \rm{ 
\bar{Y}(\zeta)=\sum_{s=1}^{2^{k-1}} \sum_{m=0}^{M-1} \Lambda_{s,m} \Upsilon_{s,m}(\zeta)= \Lambda^{T} \Upsilon(\zeta)}.
 \end{equation} 
 Now by using (\ref{1st}), we obtain  
  \begin{equation}\label{7e}
     \rm{\bar{Y}'(\zeta)= \Lambda^{\rm{T}} D \Upsilon(\zeta)} ~~~~ \text{and} ~~~~ \rm{\bar{Y}''(\zeta)= \Lambda^{\rm{T}} D^{(2)} \Upsilon(\zeta)}.
 \end{equation} 
 The residual function $\rm{R(\zeta)}$  can be obtained by using equations \eqref{7d} and \eqref{7e} in equation \eqref{7a} as 
 \begin{equation}\label{7f}
\rm{R(\zeta)= \Lambda^{\rm{T}} D^{(2)} \Upsilon(\zeta) + \frac{\mu}{\zeta} \Lambda^{\rm{T}} D \Upsilon(\zeta)} + f(\zeta, \Lambda^{\rm{T}} \Upsilon(\zeta))-g(\zeta).     
 \end{equation}
The corresponding updation in equations \eqref{7a} and \eqref{7b} are as follows
\begin{equation}\label{7h}
   \rm{ \Lambda^{T} \Upsilon(0)=\alpha_{0}, ~~~  
   \Lambda^{T} D \Upsilon(0)= \alpha_{1}},
 \end{equation}  
  or 
 \begin{equation}\label{7i}
   \rm{ \Lambda^{T} \Upsilon(0) = \beta_{0}, ~~~ 
   \Lambda^{T} \Upsilon(l)=\beta_{1}}.
 \end{equation}
  When $\rm{R(\zeta)}$ is zero, the exact solution is obtained, although it is quite difficult to get $\rm{R(\zeta)}$ identically zero. So our primary focus is to make the  residual value  as small as possible. Thus, we utilize the weighted residual methods in order to minimize the residual function. Now, the weighted residual equation is written as
 \begin{equation}
     \rm{\langle W , R(\zeta) \rangle} = \int_{0}^{2} W R(\zeta)  \,d\zeta  = 0, ~~  
 \end{equation}

 where $\rm{W}$ is the weighted function in integral sense.\\ 
 Thus, three weighted residual approaches are presented which are based on the different choices of $\rm{W}$.  \\ \\
 \textbf{\underline{Approach - I }: Collocation Approach } \\
 In this approach, the weighted function is chosen as Dirac delta($\delta$) function which vanishes everywhere except at the collocation points, this yields
 \begin{equation}
 \nonumber
      \rm{  \langle  \delta(\zeta - \zeta_{i}) , R(\zeta) \rangle = \int_{0}^{2} \delta(\zeta - \zeta_{i}) R(\zeta)  \,d\zeta, ~~  i= 1,2,3,\dots,2^{k-1}M-2, }
 \end{equation}
 which gives
 \begin{equation}
 \label{7c0}
  \rm{ R(\zeta_{i})} =0, \hspace{0.5 cm} i= 1,2,3,\dots,2^{k-1}M-2.
 \end{equation}
 Here, the extrema of Vieta-Lucas polynomials are chosen as the collocation points.
Now, the $\rm{2^{k-1}M}$ system of equations (equation \eqref{7c0} with conditions \eqref{7h} or \eqref{7i}) yields the values of unknown coefficients, and thus the required solution. The solution achieved in this sense will be called as $\rm{Y_{VLWC}(\zeta)}$ solution.\\ \\
\textbf{\underline{Approach - II }: Tau Approach}\\ 
In this approach, the weighted function is chosen to be the same as the Vieta-Lucas wavelets $\Upsilon_{i}$, which yields $\rm{(2^{k-1}M-2)}$ nonlinear equations as
\begin{equation}\label{7g}
  \rm{  \langle  \Upsilon_{i} , R(\zeta) \rangle = \int_{0}^{2} \Upsilon_{i} ~ R(\zeta) \,d\zeta =0 , \hspace{0.7 cm} i= 1,2,3,\dots,2^{k-1}M-2.} 
\end{equation}
Thus, solving $\rm{2^{k-1}M}$ equations (equation \eqref{7g} with \eqref{7h} or \eqref{7i}), we get the unknown coefficients which leads to the appropriate solution, called as $\rm{Y_{VLWT}(\zeta)}$. \\ \\
\textbf{\underline{Approach - III }: Galerkin  Approach} \\ 
The main idea behind this approach is to expand the solutions not only in terms of usual Vieta-Lucas wavelets expansion, but with some combinations of Vieta-Lucas wavelets which satisfy the boundary requirements. In this approach, the weighted functions are taken as the trial series solution, which gives 
 \begin{equation}\label{7g3}
   \rm{  \langle \Psi_{i} , R(\zeta) \rangle = \int_{0}^{2} \Psi_{i} ~ {R}(\zeta) \,d\zeta =0 , \hspace{0.7 cm} i= 1,2,3,\dots,2^{k-1}M,}
  \end{equation} 
 where the trial series solution for Galerkin approach is written as 
\begin{equation}\label{7g1}
     \rm{{Y_{VLWG}}(\zeta) =  \Lambda^{T} \Psi(\zeta) },
 \end{equation}
 and $\rm{\Psi(\zeta) = \nu(\zeta) \Upsilon(\zeta)}$, where $\rm{\nu(\zeta)}$ be the trial function. 
Now, solving $\rm{2^{k-1}M}$ system of equations from \eqref{7g3} provide the values of unknown coefficients, and thus we get the required solution $\rm{{Y_{VLWG}}(\zeta)}$.

\section{Convergence and error bound estimation}
\label{sec7}
\begin{theorem} 
Let $\rm{Y(\zeta)} \in L^{2}_{w_{s}}[0,2]$ then the Vieta-Lucas series defined in \eqref{5a} converges to $\rm{Y(\zeta)}$ by the use of Vieta-Lucas wavelets i.e.
\begin{equation*}
        \rm{ Y(\zeta) = \sum_{s=1}^{\infty} \sum_{m=0}^{\infty} \Lambda_{s,m} \Upsilon_{s,m}(\zeta)}.
    \end{equation*}
\end{theorem}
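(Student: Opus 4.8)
The plan is to regard $H=L^{2}_{w_{s}}[0,2]$ as a Hilbert space equipped with the inner product $\langle\cdot,\cdot\rangle_{w_{s}}$ and to establish convergence in the $L^{2}_{w_{s}}$-norm by combining Bessel's inequality, completeness of $H$, and totality of the wavelet family. First I would record that the normalization built into $\widehat{\mathrm{VL}}_{m}$ (the factors $1/\sqrt{\pi}$ and $1/\sqrt{2\pi}$) together with the orthogonality relation for the wavelets stated in Section~\ref{sec4} makes $\{\Upsilon_{s,m}\}$ an \emph{orthonormal} system in $H$: a direct check using the weighted orthogonality of the $\mathrm{VL}_{m}$ and the disjointness of the supports of $\Upsilon_{s,m}$ for distinct $s$ shows $\langle\Upsilon_{s,m},\Upsilon_{s',m'}\rangle_{w_{s}}=\delta_{ss'}\delta_{mm'}$. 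Consequently the coefficients defined in \eqref{s1} are exactly the generalized Fourier coefficients $\Lambda_{s,m}=\langle Y,\Upsilon_{s,m}\rangle_{w_{s}}$.

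Next, writing the partial sums of \eqref{5a} as $Y_{N}(\zeta)=\sum\Lambda_{s,m}\Upsilon_{s,m}(\zeta)$ taken over the first $N$ indices, I would apply Bessel's inequality to obtain $\sum_{s,m}|\Lambda_{s,m}|^{2}\le\norm{Y}_{w_{s}}^{2}<\infty$, which is finite precisely because $Y\in L^{2}_{w_{s}}[0,2]$. Since this scalar series converges, its tails vanish, and orthonormality gives $\norm{Y_{N}-Y_{N'}}_{w_{s}}^{2}=\sum|\Lambda_{s,m}|^{2}$ summed over the block of indices between $N'$ and $N$; hence $\{Y_{N}\}$ is a Cauchy sequence in $H$. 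Completeness of $H$ then produces a limit $g\in H$ with $Y_{N}\to g$ in the $L^{2}_{w_{s}}$-norm.

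The remaining and hardest step is to identify the limit $g$ with $Y$ itself, that is, to prove that the orthonormal system is total and not merely orthonormal. Since $g$ has the same Fourier coefficients as $Y$, one has $\langle Y-g,\Upsilon_{s,m}\rangle_{w_{s}}=0$ for every $s,m$, and I would argue that this forces $Y-g=0$ almost everywhere. The key observation is that on each dyadic subinterval $\bigl[\tfrac{\hat{s}-2}{2^{k}},\tfrac{\hat{s}+2}{2^{k}}\bigr)$ the functions $\Upsilon_{s,m}$, $m=0,1,2,\dots$, are scaled and normalized Vieta-Lucas polynomials and therefore span all algebraic polynomials on that subinterval; by the Weierstrass approximation theorem, together with the integrability of the weight $w_{s}$, polynomials are dense in $L^{2}_{w_{s}}$ of that subinterval. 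Assembling these local density statements over the partition of $[0,2]$ shows the wavelet family is total, so the only function orthogonal to all of them is zero, giving $g=Y$ in $L^{2}_{w_{s}}[0,2]$.

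I expect the main obstacle to be precisely this totality argument: the orthonormality bookkeeping and the Bessel/Cauchy machinery are routine, but convergence to $Y$ rather than to some other element of $H$ genuinely requires completeness of the system, which has to be derived from the density of polynomials on each subinterval rather than simply assumed.
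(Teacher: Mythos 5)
Your proposal is correct and follows the same Hilbert-space skeleton as the paper's own proof: orthonormality of $\{\Upsilon_{s,m}\}$, Bessel's inequality to get square-summability of the coefficients $\Lambda_{s,m}$, the resulting Cauchy property of the partial sums, completeness of $L^{2}_{w_{s}}[0,2]$ to produce a limit, and then identification of that limit with $Y$. The genuine difference lies in the final identification step. The paper simply declares at the outset that the wavelets ``form an orthonormal basis,'' and then concludes $Y=\rho$ from $\langle \rho - Y,\Upsilon_{j}\rangle = 0$ for all $j$; that inference is valid only when the orthonormal system is total (maximal), which is precisely what is being assumed rather than proved, so the paper's argument as written only shows that the series converges to the orthogonal projection of $Y$ onto the closed span of the wavelets. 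You correctly singled this out as the hardest step and supplied the missing ingredient: on each dyadic subinterval the functions $\Upsilon_{s,m}$, $m\ge 0$, span all algebraic polynomials there (since $\mathrm{VL}_{m}$ has degree exactly $m$), polynomials are dense in the weighted $L^{2}$ space of that subinterval by Weierstrass approximation together with integrability of the Chebyshev-type weight, and the subintervals partition $[0,2)$, so the only function orthogonal to every wavelet is zero. Thus your route is strictly more complete than the paper's; what the paper's shorter argument buys is brevity alone, at the price of assuming the completeness of the system, while your totality argument is exactly what is needed to make the theorem's conclusion --- convergence to $Y$ itself rather than to some other element of the space --- actually follow.
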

\begin{proof}
Suppose $\rm{Y(\zeta)} \in L^{2}_{w_{s}}[0,2]$, where $L^{2}_{w_{s}}[0,2]$ be the Hilbert space and $\rm{\Upsilon_{s,m}}$ defined in \eqref{4a} forms an orthonormal basis with respect to weight function $w_{s}(\zeta) = w(2^{k} \zeta - \Hat{s}). $\\
Consider,  $\rm{Y(\zeta) = \sum_{m=0}^{M-1} \Lambda_{s,m} \Upsilon_{s,m}(\zeta) }$ where $\rm{\Lambda_{s,m} = \langle Y(\zeta),\Upsilon_{s,m}(\zeta)\rangle_{w_{s}(\zeta)}}$ for a fixed $\rm{s}$ and $\rm{\Upsilon_{s,m}(\zeta) = \Upsilon_{j}(\zeta)}$, $\rm{\varsigma_{j} = \langle Y(\zeta),\Upsilon_{j}(\zeta)\rangle}$.
 The sequence of partial sum $\{\rm{\rho_{s}}\}$ of $\{\rm{\varsigma_{j} \Upsilon_{j}(\zeta)}\}_{j \geq 1}$ is defined as $\rm{\rho_{s} = \sum_{j=1}^{s} \varsigma_{j} \Upsilon_{j}(\zeta)}.$\\
Now,
\begin{equation*}
  \rm{  \langle Y(\zeta),\rho_{s} \rangle =  \langle Y(\zeta), \sum_{j = 1}^{s} \varsigma_{j} \Upsilon_{j}(\zeta) \rangle = \sum_{j = 1}^{s} |\varsigma_{j}|^{2}}. 
\end{equation*}
 For s $>$ m, we assert that
\begin{equation*}
   \rm{ \norm{\rho_{s} - \rho_{m} } ^{2} = \sum_{j = m+1}^{s} |\varsigma_{j}|^{2}}. 
\end{equation*}
Now, 
\begin{equation*}
  \rm{  \norm{ \sum_{j = m+1}^{s} \varsigma_{j} \Upsilon_{j}(\zeta) }^{2} =  \langle \sum_{j = m+1}^{s} \varsigma_{j} \Upsilon_{j}(\zeta),\sum_{j = m+1}^{s} \varsigma_{j} \Upsilon_{j}(\zeta) \rangle = \sum_{j = m+1}^{s} |\varsigma_{j}|^{2},\hspace{0.2 cm} s > m.  }
\end{equation*}
Therefore,
\begin{equation*}
   \rm{  \norm{ \sum_{j = m+1}^{s} \varsigma_{j} \Upsilon_{j}(\zeta) }^{2} = \sum_{j = m+1}^{s} |\varsigma_{j}|^{2},\hspace{0.2 cm} s > m.}
\end{equation*}
From Bessel's inequality, we know $\rm{\sum_{j = 1}^{\infty} |\varsigma_{j}|^{2}}$  is convergent.\\
Thus we have 
\begin{equation*}
     \rm{  \norm{ \sum_{j = m+1}^{s} \varsigma_{j} \Upsilon_{j}(\zeta) }^{2}} \rightarrow 0 \hspace{0.5 cm} \text{as}  \hspace{0.2 cm}s \rightarrow \infty. 
\end{equation*}
Which implies 
\begin{equation*}
     \rm{  \norm{ \sum_{j = m+1}^{s} \varsigma_{j} \Upsilon_{j}(\zeta) }} \rightarrow 0 ,
\end{equation*}
and $\{ \rm{\rho_{s}}\}$ is a Cauchy sequence that converges to $\rm{\rho}$(say).\\
Thus,
\begin{equation*}
\begin{split}
    \rm{  \langle \rho - Y(\zeta) , \Upsilon(\zeta) \rangle}  &=  \rm{\langle \rho , \Upsilon(\zeta) \rangle - \langle Y(\zeta) , \Upsilon(\zeta) \rangle ,}\\
      &=  \rm{\langle \lim_{s \rightarrow \infty} \rho_{s} , \Upsilon(\zeta) \rangle - \varsigma_{j} ,}\\
      &=  \rm{\varsigma_{j} - \varsigma_{j}}.\\
\end{split}
\end{equation*}
Which implies 
\begin{equation*}
     \rm{  \langle \rho - Y(\zeta) , \Upsilon(\zeta) \rangle   =  0}.
\end{equation*}
Therefore, $\rm{Y(\zeta) = \rho}$ and $\rm{\sum_{j = 1}^{s} \varsigma_{j} \Upsilon_{j}(\zeta)}$ converges to $\rm{Y(\zeta)}$ for $s \rightarrow \infty$.
\end{proof}
\begin{theorem} \label{8b}
Let $Y(\zeta)$ be a second order square integrable function defined over $[0,2]$ with bounded second order derivative say $|Y''(\zeta)|\leq H$ for some constant H. Then 
$Y(\zeta)$ can be expanded as an infinite sum of Vieta Lucas wavelets and the series converges to $Y(\zeta)$ uniformly, that is 
    \begin{equation*}
       \rm{  Y(\zeta) = \sum_{s=1}^{\infty} \sum_{m=0}^{\infty} \Lambda_{s,m} \Upsilon_{s,m}(\zeta)},
    \end{equation*}
    where $\rm{ \Lambda_{s,m} = \langle Y(\zeta),\Upsilon_{s,m}(\zeta)\rangle_{{L^{2}}_{w_{s}}[0,2]} }.$
\end{theorem}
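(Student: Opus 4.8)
The plan is to show that the coefficients $\Lambda_{s,m}$ decay fast enough in both indices that the wavelet series is dominated by a convergent series, and then to pass from coefficient summability to uniform convergence of the function series. First I would write out $\Lambda_{s,m}=\int_0^2 Y(\zeta)\,\Upsilon_{s,m}(\zeta)\,w_s(\zeta)\,d\zeta$ from \eqref{s1} and restrict the integral to the support of $\Upsilon_{s,m}$, namely $[(\hat s-2)2^{-k},(\hat s+2)2^{-k})$. On that interval \eqref{4a} gives $\Upsilon_{s,m}(\zeta)=2^{k/2}(2\pi)^{-1/2}VL_m(2^k\zeta-\hat s)$ for $m\ge 1$ and $w_s(\zeta)=w(2^k\zeta-\hat s)$. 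The substitution $t=2^k\zeta-\hat s$ maps the support onto $[-2,2]$ and yields
\[
\Lambda_{s,m}=\frac{1}{2^{k/2}\sqrt{2\pi}}\int_{-2}^{2}Y\big((t+\hat s)/2^k\big)\,VL_m(t)\,w(t)\,dt .
\]

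Next I would pass to the angular variable by setting $t=2\cos\delta$. This is where the definitional identity $VL_m(t)=2\cos(m\delta)$ pays off: together with $w(t)=1/\sqrt{4-t^2}=1/(2\sin\delta)$ and $dt=-2\sin\delta\,d\delta$, the weight and the Jacobian cancel cleanly, so the coefficient collapses to a pure cosine integral $\Lambda_{s,m}=\frac{2}{2^{k/2}\sqrt{2\pi}}\int_0^{\pi}G(\delta)\cos(m\delta)\,d\delta$, where $G(\delta)=Y\big((2\cos\delta+\hat s)/2^k\big)$.

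The heart of the argument is two integrations by parts in $\delta$. The first produces a boundary term proportional to $\sin(m\delta)$, which vanishes at $\delta=0,\pi$ since $m\in\mathbb{N}$; the second produces a boundary term proportional to $G'(\delta)$, which also vanishes at $\delta=0,\pi$ because $G'(\delta)=Y'(\cdot)\,(-2\sin\delta/2^k)$ carries a factor $\sin\delta$. Hence $\int_0^\pi G\cos(m\delta)\,d\delta=-m^{-2}\int_0^\pi G''(\delta)\cos(m\delta)\,d\delta$, which supplies the decisive factor $m^{-2}$. Estimating $|G''|$ through the chain rule uses the hypothesis $|Y''|\le H$ (for the term $Y''(\cdot)\,(2\sin\delta/2^k)^2$) together with boundedness of $Y'$ on the compact interval $[0,2]$, which follows from $Y\in C^2[0,2]$; this gives a bound of the form $|\Lambda_{s,m}|\le C\,2^{-3k/2}m^{-2}$. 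Since $\hat s=2(2s-1)$ forces $2^k$ to grow with $s$, this can be recast as $|\Lambda_{s,m}|\le \widetilde C\, s^{-3/2}m^{-2}$, with the $m=0$ scaling terms handled separately (no oscillation is available there, but the single coarse coefficient per scale is bounded directly).

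Finally I would sum the bounds: $\sum_{s}\sum_{m\ge 1}|\Lambda_{s,m}|\le \widetilde C\big(\sum_s s^{-3/2}\big)\big(\sum_m m^{-2}\big)<\infty$, a product of convergent $p$-series. Combined with the uniform size bound $|\Upsilon_{s,m}(\zeta)|\le 2^{k/2}\sqrt{2/\pi}$ and, crucially, the fact that on each scale the translates have disjoint supports so that at any fixed $\zeta$ only one index $s$ is active, the partial sums are uniformly Cauchy, and the Weierstrass $M$-test delivers absolute and uniform convergence of $\sum_{s,m}\Lambda_{s,m}\Upsilon_{s,m}$; its limit coincides with the $L^2_{w_s}$ limit identified in the preceding theorem, namely $Y(\zeta)$. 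I expect the main obstacle to be the bookkeeping in the two integrations by parts — verifying that both boundary contributions genuinely vanish and then tracking the powers of $2^k$ (equivalently $s$) through the chain-rule expansion of $G''$ — so that the resulting estimate on $|\Lambda_{s,m}|$ is simultaneously summable in $s$ and in $m$; the $m=0$ terms and the appearance of $Y'$ (not merely $Y''$) in $G''$ are the points most likely to need extra care.
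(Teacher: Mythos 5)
Your proposal is correct and follows the paper's skeleton --- reduce $\Lambda_{s,m}$ to a cosine integral via $t=2^{k}\zeta-\hat{s}$, then $t=2\cos\delta$, integrate by parts twice, convert powers of $2^{-k}$ into powers of $s^{-1}$ through $s\le 2^{k-1}$, and finish with a $p$-series/M-test argument --- but the two proofs part ways at the second integration by parts, and the difference has consequences. The paper first rewrites $2\sin(m\delta)\sin\delta=\cos((m-1)\delta)-\cos((m+1)\delta)$ and integrates each piece by parts, so the only term produced is $Y''$ carrying a full second factor of $2^{-k+1}$; this gives $|\Lambda_{s,m}|\le H\sqrt{\pi}\,2^{(-5k+5)/2}/(m^{2}-1)\le H\sqrt{\pi}/\bigl(s^{5/2}(m^{2}-1)\bigr)$ for $m>1$, with $m=1$ handled separately through $\max|Y'|$. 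You instead differentiate the composite $G(\delta)=Y\bigl((2\cos\delta+\hat{s})/2^{k}\bigr)$ twice, and the chain-rule term $Y'(\cdot)\,(-2\cos\delta)/2^{k}$ costs you a factor $2^{k}$ relative to the paper: your bound is $C\,2^{-3k/2}m^{-2}\sim s^{-3/2}m^{-2}$. That is still absolutely summable in both indices (and it is uniform over all $m\ge 1$, so you avoid the paper's awkward separate $m=1$ case), but it is strictly weaker in $s$, and this shows up in the final step: after multiplying by $\sup_{\zeta}|\Upsilon_{s,m}(\zeta)|\sim s^{1/2}$ your terms are only $O(s^{-1}m^{-2})$, which is not summable over $s$, and that is exactly why your one-active-translate-per-scale (disjoint support) observation is not optional but essential to collapse the $s$-sum at each fixed $\zeta$. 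The paper's stronger $s^{-5/2}$ decay survives the $s^{1/2}$ factor without any such device (in fact the paper silently discards $|\Upsilon_{s,m}|$ altogether, so your handling of this step is the more careful of the two). One practical cost of your route: the paper's $s^{-5/2}(m^{2}-1)^{-1}$ estimate is precisely the input to the error-bound theorem that follows, so your weaker $s^{-3/2}$ decay would not reproduce that downstream estimate as stated.
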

\begin{proof}
From \eqref{s1}, we have
\begin{align*}
   \rm{ \Lambda_{s,m} = \langle Y(\zeta),\Upsilon_{s,m}(\zeta)\rangle_{{L^{2}}_{w_{s}}[0,2]}  = \int_{0}^{2} Y(\zeta) \Upsilon_{s,m}(\zeta) w_{s}(\zeta) \,d\zeta}, \\ 
   \rm{ = \int_{\frac{\Hat{s}-2}{2^{k}}}^{\frac{\Hat{s}+2}{2^{k}}} Y(\zeta) 2^{\frac{k}{2}} \sqrt{\frac{1}{2 \pi}} {\rm{VL_{m}}}(2^{k}\zeta-\Hat{s})\, w_{s}(2^{k}\zeta-\Hat{s}) \,d\zeta},
\end{align*}
where $\Hat{s}= 2(2s-1)$. Substituting  $2^{k}\zeta-\Hat{s}= 2 \cos{\delta} $ and from the definition of Vieta-Lucas polynomial, we obtain 
\begin{align*}
  \rm{ \Lambda_{s,m}} =& \rm {2^{\frac{-k}{2}} \sqrt{\frac{1}{2 \pi}} \int_{0}^{\pi} Y\left(\frac{2 \cos{\delta}+\Hat{s}}{2^{k}}\right)  \rm{VL_{m}}}(\delta)\, w_{s}(\delta) \,d\delta, \\
    =& \rm{2^{\frac{-k}{2}} \sqrt{\frac{1}{2 \pi}} \int_{0}^{\pi} Y\left(\frac{2 \cos{\delta}+\Hat{s}}{2^{k}}\right)  2 \cos{m \delta}\, \frac{1}{\sqrt{4-(2\cos{\delta})^2}} \, 2 \sin{\delta} \,d\delta}, \\
 =& \rm{2^{\frac{-k}{2}} \sqrt{\frac{1}{2 \pi}} \int_{0}^{\pi} Y\left(\frac{2 \cos{\delta}+\Hat{s}}{2^{k}}\right)  2 \cos{m \delta}\, \frac{1}{2\sqrt{1-\cos^2{\delta}}} \, 2 \sin{\delta} \,d\delta },\\
  =& \rm{ 2^{\frac{-k+1}{2}} \sqrt{\frac{1}{\pi}} \int_{0}^{\pi} Y\left(\frac{2 \cos{\delta}+\Hat{s}}{2^{k}}\right)   \cos{m \delta}\, d\delta}, 
\end{align*}
Using the integration by parts, we get
 \begin{align*}
     \rm{ \Lambda_{s,m}} =& \rm{\frac{2^{\frac{-3k+1}{2}}}{m} \sqrt{\frac{1}{\pi}} \int_{0}^{\pi} Y'\left(\frac{2 \cos{\delta}+\Hat{s}}{2^{k}}\right)   2 \sin{m \delta} \sin{\delta}\, d\delta}, \\
      =& \rm{ \frac{2^{\frac{-3k+1}{2}}}{m} \sqrt{\frac{1}{\pi}} \int_{0}^{\pi} Y'\left(\frac{2 \cos{\delta}+\Hat{s}}{2^{k}}\right)  (\cos{(m-1)\delta}-\cos{(m+1)\delta})\, d\delta },\\
       = & \rm{ \frac{2^{\frac{-3k+1}{2}}}{m} \sqrt{\frac{1}{\pi}}  \left( \int_{0}^{\pi}Y'\left(\frac{2 \cos{\delta}+\Hat{s}}{2^{k}}\right)  \cos{(m-1)\delta}\, d\delta - \int_{0}^{\pi}Y'\left(\frac{2 \cos{\delta}+\Hat{s}}{2^{k}}\right)   \cos{(m+1)\delta}\, d\delta\right), }
 \end{align*}
 \begin{equation}
     \label{8c1}
      \implies  \rm{ |\Lambda_{s,m}|} \leq  ~\rm{ \frac{2^{\frac{-3k+1}{2}}}{m} \sqrt{\frac{1}{\pi}}  (|I_{1}|+|I_{2}|)},
 \end{equation}

     where $\rm{I_{1}} = \int_{0}^{\pi} Y'\left(\frac{2 \cos{\delta}+\Hat{s}}{2^{k}} \right)  \cos{(m-1)\delta}\, d\delta $ and $\rm{I_{2}}= \int_{0}^{\pi}Y'\left(\frac{2 \cos{\delta}+\Hat{s}}{2^{k}}\right)   \cos{(m+1)\delta}\, d\delta $. Next we estimate $\rm{I_{1}}$ and  $\rm{I_{2}}$ respectively.
    On Integrating $\rm{I_{1}}$ by parts, we have
      \begin{align*}
       \rm{ I_1} =& \rm{\frac{2^{-k+1}}{m-1} \int_{0}^{\pi}Y''\left(\frac{2 \cos{\delta}+\Hat{s}}{2^{k}}\right)  \sin{(m-1)\delta} \sin{\delta}\, d\delta}, \\
       \rm{ |I_1|} \leq & \rm{\frac{2^{-k+1}}{m-1} \int_{0}^{\pi}\left|Y''\left(\frac{2 \cos{\delta}+\Hat{s}}{2^{k}}\right)\right|  \, d\delta },
     \end{align*}
     which gives
     \begin{equation} \label{8c2}
         \rm{ |I_1| }\leq  \rm{ \frac{ H \pi 2^{-k+1}}{m-1}}.
     \end{equation}
     Similarly on Integrating $\rm{I_{2}}$ , we obtain 
     \begin{align*}
       \rm{ I_2} =& \rm{\frac{2^{-k+1}}{m+1} \int_{0}^{\pi}Y''\left(\frac{2 \cos{\delta}+\Hat{s}}{2^{k}}\right)  \sin{(m+1)\delta} \sin{\delta}\, d\delta}, 
     \end{align*}
     which gives
     \begin{equation} \label{8c3}
          \rm{  |I_2|} \leq  \rm{ \frac{ H \pi 2^{-k+1}}{m+1}}.
     \end{equation}
    By using \eqref{8c2} and \eqref{8c3} in \eqref{8c1}, we obtain
    \begin{align*}
        \rm{ | \Lambda_{s,m}|\leq \frac{H \sqrt{\pi} \, 2^{\frac{-5k+5}{2}}}{m^2 -1}}, \hspace{0.5 cm} m>1.
    \end{align*}
    Since $\rm{s\leq 2^{k-1}}$ and $\rm{s \geq 1}$. Therefore we get
    \begin{equation}\label{8c4}
         \rm{| \Lambda_{s,m}|\leq \frac{ H \sqrt{\pi} \,}{s^{\frac{5}{2}}(m^2 -1)}}. 
    \end{equation} 
    For m = 1, we have 
     \begin{align*}
        \rm{ | \Lambda_{s,1}|\leq \frac{\sqrt{\pi} \,}{2~s^{\frac{3}{2}}(m^2 -1)} \max_{0 \leq \zeta \leq 2} |\rm{Y'(\zeta)}|}. 
    \end{align*}  
    Also,
    \begin{align*}
       \left| \rm{ \sum_{s=1}^{\infty} \sum_{m=0}^{\infty}  \Lambda_{s,m} \Upsilon_{s,m}(\zeta)} \right| &\leq \rm{\left| \sum_{s=1}^{\infty}  \Lambda_{s,0} \Upsilon_{s,0}(\zeta) \right| + \sum_{s=1}^{\infty} \sum_{m=1}^{\infty} |\Lambda_{s,m}|
       |\Upsilon_{s,m}(\zeta)|} \\
       & \leq \rm{\left|\sum_{s=1}^{\infty}  \Lambda_{s,0} \Upsilon_{s,0}(\zeta) \right| + \sum_{s=1}^{\infty} \sum_{m=1}^{\infty} |\Lambda_{s,m}|} < \infty.
     \end{align*}
    which implies that the series $\rm{ \sum_{s=1}^{2^{k-1}} \sum_{m=0}^{M-1} \Lambda_{s,m}}$ is absolutely convergent. Thus the series\\ $\rm{ \sum_{s=1}^{2^{k-1}} \sum_{m=0}^{M-1} \Lambda_{s,m} \Upsilon_{s,m}(\zeta)}$ converges to $\rm{Y(\zeta)}$ uniformly.
\end{proof}

\begin{lemma}\label{8c}
Let $f(\zeta)$ be a continuous, positive, decreasing function for $\zeta \geq m$ if  $f(k) = \Lambda_{k}$, provided that $\sum \Lambda_{s}$ is convergent and $R_{s} = \sum_{k = s+1}^{\infty} \Lambda_{k}$, then $R_{s} \leq \int_{s}^{\infty} f(\zeta) d\zeta$    \cite{stewart2012single}. 
\end{lemma}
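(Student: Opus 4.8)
The statement is the standard remainder estimate that accompanies the integral test, so the plan is to reproduce its elementary proof using only the monotonicity of $f$ together with the comparison between the areas of rectangles and the area under the curve. First I would fix an integer $k \geq s$ and exploit the fact that $f$ is decreasing to note that $f(\zeta) \geq f(k+1) = \Lambda_{k+1}$ for every $\zeta \in [k, k+1]$. Integrating this pointwise inequality over the unit interval $[k, k+1]$ immediately gives $\Lambda_{k+1} \leq \int_{k}^{k+1} f(\zeta)\, d\zeta$.

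The next step is to sum these inequalities over all $k \geq s$. Re-indexing the tail as $R_{s} = \sum_{k=s+1}^{\infty} \Lambda_{k} = \sum_{k=s}^{\infty} \Lambda_{k+1}$, the summed inequality reads $R_{s} \leq \sum_{k=s}^{\infty} \int_{k}^{k+1} f(\zeta)\, d\zeta$, and by additivity of the integral over the contiguous intervals $[k, k+1]$ the right-hand side collapses to the improper integral $\int_{s}^{\infty} f(\zeta)\, d\zeta$, which is exactly the claimed bound.

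The only point deserving a word of justification is that every quantity involved is finite so that the manipulations are legitimate. Positivity of $f$ makes each partial integral monotone in its upper limit, and the hypothesis that $\sum \Lambda_{s}$ converges, combined with the companion estimate $\int_{s}^{N} f(\zeta)\, d\zeta \leq \sum_{k=s}^{N-1} \Lambda_{k}$ (obtained symmetrically from $f(\zeta) \leq f(k)$ on $[k, k+1]$), shows that $\int_{s}^{\infty} f(\zeta)\, d\zeta$ is finite; hence passing to the limit $N \to \infty$ and interchanging the sum with the integral is valid. I do not expect any substantive obstacle here, since this is the textbook integral-test remainder bound; the only care required is to keep the indexing consistent so that each rectangle of height $\Lambda_{k+1}$ is compared against the correct subinterval $[k, k+1]$.
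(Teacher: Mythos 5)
Your proof is correct: it is the standard integral-test remainder argument, with each rectangle of height $\Lambda_{k+1}$ correctly compared against $f$ on $[k,k+1]$ using monotonicity, then summed and collapsed by additivity of the integral. Note that the paper itself gives no proof of this lemma at all---it simply cites Stewart's calculus text---so your argument supplies precisely the textbook proof that the citation points to, and there is nothing to reconcile between the two.
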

\begin{theorem}
For the Vieta-Lucas wavelets expansion, if $Y(\zeta)$ satisfies the theorem \eqref{8b}, then the following error estimate holds for $M >2$,
\begin{equation*}
  \rm{  \norm{Y(\zeta) - \bar{Y}(\zeta)}_{w_{s}}} < H \sqrt{\rm{ \pi   \frac{(M^{2} - 2M) ln(M) - M^{2} ln(M-2) + (2 ln(M-2)-2)M+2}{(2^{5(2^{k-1} - 1)} 5 \ln{2} )~ 4M(M-2)} }}.
\end{equation*}
\begin{proof}
Considering the Vieta-Lucas wavelets expansion as
\begin{equation*}
     \rm{  \bar{Y}(\zeta) = \sum_{s=1}^{2^{k-1}} \sum_{m=0}^{M-1} \Lambda_{s,m} \Upsilon_{s,m}(\zeta)}.
\end{equation*}
From \eqref{4a}, by using the orthogonality property of Vieta-Lucas wavelets with respect to weight function $w_{s}(\zeta)$, we obtain
\begin{equation*}
    \rm{ \norm{Y(\zeta) - \bar{Y}(\zeta)}^{2}_{w_{s}} = \sum_{s=2^{k-1}+1}^{\infty} \sum_{m=M}^{\infty} \Lambda^{2}_{s,m}}.
\end{equation*}
By using theorem \eqref{8b}, it can be expressed as
\begin{equation*}
    \rm{ \norm{Y(\zeta) - \bar{Y}(\zeta)}^{2}_{w_{s}} < H^{2} \pi \sum_{s=2^{k-1}+1}^{\infty} \sum_{m=M}^{\infty} \frac{1}{2^{5k-5}(m^2 - 1)^{2}}}.
\end{equation*}
From lemma \eqref{8c}, we obtain
\begin{equation*}
\begin{split}
     \rm{\norm{Y(\zeta) - \bar{Y}(\zeta)}^{2}_{w_{s}}} &< \rm{H^{2} \pi \int_{s=2^{k-1}}^{\infty} \int_{m=M - 1}^{\infty} \frac{1}{2^{5 \zeta - 5}(z^2 - 1)^{2}} d\zeta dz ,} \\
     &= \rm{ H^{2} \pi \left(  \int_{s=2^{k-1}}^{\infty} \frac{1}{2^{5 \zeta - 5}} d\zeta \right) \left(\int_{m=M - 1}^{\infty} \frac{1}{(z^2 - 1)^{2}} dz \right) ,}\\
     &=\rm{ H^{2} \pi \left(\frac{1}{2^{5(2^{k-1} - 1)} 5 \ln{2}} \right) \left( \frac{(M^{2} - 2M) ln(M) - M^{2} ln(M-2) + (2 ln(M-2)-2)M+2}{4M(M-2)} \right)}.
\end{split}
\end{equation*}
which completes the proof.
\end{proof}
\end{theorem}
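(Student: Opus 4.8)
The plan is to reduce the truncation error to a tail sum of squared wavelet coefficients, bound those coefficients by the estimate already proved in Theorem \ref{8b}, and then replace the resulting double series by a double integral via the integral test of Lemma \ref{8c}. First I would write $\bar{Y}(\zeta)=\sum_{s=1}^{2^{k-1}}\sum_{m=0}^{M-1}\Lambda_{s,m}\Upsilon_{s,m}(\zeta)$ and invoke the orthogonality of the Vieta-Lucas wavelets with respect to the weight $w_{s}$ to obtain the Parseval-type identity $\norm{Y(\zeta)-\bar{Y}(\zeta)}_{w_{s}}^{2}=\sum_{s=2^{k-1}+1}^{\infty}\sum_{m=M}^{\infty}\Lambda_{s,m}^{2}$, so that only the discarded coefficients survive.

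Next I would insert the coefficient bound from Theorem \ref{8b}. Squaring $|\Lambda_{s,m}|\le H\sqrt{\pi}\,2^{(-5k+5)/2}/(m^{2}-1)$ gives $\Lambda_{s,m}^{2}<H^{2}\pi/(2^{5k-5}(m^{2}-1)^{2})$, so the tail is controlled by $H^{2}\pi\sum_{s}\sum_{m}1/(2^{5k-5}(m^{2}-1)^{2})$. Because the summand is positive and decreasing in each index, Lemma \ref{8c} lets me dominate each sum by the corresponding improper integral; the double series then splits into the product of an exponential-type integral in the scale variable and a rational integral in the degree variable, namely $\left(\int_{2^{k-1}}^{\infty}2^{-(5\zeta-5)}\,d\zeta\right)\left(\int_{M-1}^{\infty}(z^{2}-1)^{-2}\,dz\right)$.

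The two integrals are then evaluated in closed form. The scale integral is elementary and yields $1/(2^{5(2^{k-1}-1)}\,5\ln 2)$. The degree integral is the technical heart of the argument: using the partial-fraction decomposition $(z^{2}-1)^{-2}=\tfrac14\bigl[(z-1)^{-2}+(z+1)^{-2}\bigr]+\tfrac14\bigl[(z+1)^{-1}-(z-1)^{-1}\bigr]$ I would integrate to the antiderivative $\tfrac14\ln\frac{z+1}{z-1}-\frac{z}{2(z^{2}-1)}$ and evaluate from $z=M-1$ to $\infty$, where the substitutions $z-1=M-2$ and $z+1=M$ produce precisely the $\ln M$ and $\ln(M-2)$ terms of the stated numerator over the common denominator $4M(M-2)$. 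Collecting the two factors and taking the square root produces the asserted bound, and the restriction $M>2$ is exactly what keeps $\ln(M-2)$ and the factor $M-2$ well defined.

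I expect the main obstacle to be the careful handling of the degree integral: organizing the partial fractions, combining the logarithmic and rational pieces over $4M(M-2)$, and above all keeping track of the signs when evaluating at the lower limit so that the (necessarily positive) integrand yields a correctly signed expression under the square root. A secondary point requiring care is justifying the passage from the discrete double tail to the separated double integral, since the monotonicity hypotheses of Lemma \ref{8c} must be verified in each variable separately before the product bound can be asserted.
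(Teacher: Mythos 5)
Your proposal retraces the paper's proof step for step: the same Parseval reduction to the tail $\sum_{s>2^{k-1}}\sum_{m\ge M}\Lambda_{s,m}^{2}$, the same coefficient bound from Theorem \ref{8b}, the same appeal to Lemma \ref{8c} to replace the double tail by $\bigl(\int_{2^{k-1}}^{\infty}2^{-(5\zeta-5)}\,d\zeta\bigr)\bigl(\int_{M-1}^{\infty}(z^{2}-1)^{-2}\,dz\bigr)$, and the same closed-form evaluations. But the step you yourself single out as the technical heart is resolved incorrectly, and this is a genuine gap, not a cosmetic one. Your antiderivative $F(z)=\tfrac14\ln\frac{z+1}{z-1}-\frac{z}{2(z^{2}-1)}$ is right, and since $F'(z)=(z^{2}-1)^{-2}>0$ while $F(z)\to 0$ as $z\to\infty$, we have $F<0$ on $(1,\infty)$ and
\begin{equation*}
\int_{M-1}^{\infty}\frac{dz}{(z^{2}-1)^{2}}\;=\;-F(M-1)\;=\;\frac{M-1}{2M(M-2)}-\frac14\ln\frac{M}{M-2}\;>\;0.
\end{equation*}
The stated numerator, however, collapses to $M(M-2)\ln\frac{M}{M-2}-2(M-1)$, so the stated expression over $4M(M-2)$ equals $+F(M-1)$, i.e.\ the \emph{negative} of the integral: at $M=3$ it is $(3\ln 3-4)/12\approx -0.059$, while the integral equals $+0.059$. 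Hence the evaluation cannot ``produce precisely the stated numerator''; a careful evaluation produces its negative, the radicand in the displayed bound is negative for every $M>2$, and the bound as printed is vacuous. This sign error sits in the paper's own proof (the correct numerator is $2(M-1)-M(M-2)\ln\frac{M}{M-2}$), and a blind reconstruction that certifies the printed formula certifies something false.

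Two further defects are shared between your write-up and the paper and deserve mention. First, the ``Parseval'' step is not the double tail you both write: the discarded index set is $\{(s,m):\,s>2^{k-1}\ \text{or}\ m\ge M\}$, so the mixed blocks $s\le 2^{k-1},\,m\ge M$ and $s>2^{k-1},\,m<M$ are unaccounted for. Second, the bound $\Lambda_{s,m}^{2}<H^{2}\pi\,2^{-(5k-5)}(m^{2}-1)^{-2}$ is constant in $s$, so its sum over $s>2^{k-1}$ diverges; passing to the integrand $2^{-(5\zeta-5)}$ silently replaces the fixed parameter $k$ by the summation variable, which Lemma \ref{8c} does not license (and the summand is then not ``decreasing in each index'', as your monotonicity check would reveal). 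The repair is to use the $s$-dependent estimate $|\Lambda_{s,m}|\le H\sqrt{\pi}\,s^{-5/2}(m^{2}-1)^{-1}$ from \eqref{8c4} in the proof of Theorem \ref{8b}, whose tail in $s$ the integral test does control, at the price of a different constant in the final bound.
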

\section{Numerical Examples}
\label{sec8}

This section contains numerical examples that demonstrate the efficiency and reliability of the presented schemes.

\begin{example}
\label{ex1}
Consider the non-homogeneous SDE \cite{nasab2013wavelet}:
\begin{equation}\label{ex11} 
   \rm{ Y''(\zeta)+\frac{1}{\zeta}Y'(\zeta)  ={\left(\frac{8}{8-\zeta^{2}}\right)}^{2},\hspace{0.5 cm} 0\leq \zeta \leq 1,
}\end{equation}
with 
\begin{equation}
   \rm{Y(\zeta)|_{\zeta = 0} = 0, \hspace{0.4 cm} Y'(\zeta)|_{\zeta = 0} = 0. }\\
\end{equation}
The exact solution is
\begin{equation*}
   \rm{Y_{Exact}(\zeta) = 2 \log{\left(\frac{8}{8-\zeta^2}\right)}}. 
\end{equation*}
The following solutions are obtained by using the proposed numerical schemes  at $\eta$ = 12:
\begin{equation*}
\begin{split}
   \rm{ Y_{VLWC}(\zeta)} &= (1.60 \times 10^{-16}) - (1.18 \times 10^{-16}) \zeta +  \dots + (1.82 \times 10^{-5}) \zeta^{11} ,\\
 \rm{Y_{VLWT}(\zeta)} &=   (1.83 \times 10^{-16}) + (4.51 \times 10^{-17}) \zeta +  \dots + (8.79 \times 10^{-4}) \zeta^{11},\\
 \rm{ Y_{VLWG}(\zeta)} &= 0.25 \zeta^{2} -0.08 \zeta^{3} + 0.40 \zeta^{4} + \dots - (9.75 \times 10^{-4} \zeta^{13}).
 \end{split}
\end{equation*}

\end{example}

\begin{example}
\label{ex2}
Take the nonlinear SDE as \cite{wazwaz2001reliable}:
\begin{equation}
   \rm{ Y''(\zeta)+ \pi^{3} \frac{(Y^{2}(\zeta))}{\sin{(\pi \zeta)}}  = 0,\hspace{0.5 cm} 0< \zeta < 1,
}\end{equation}
with boundary restrictions
\begin{equation}
   \rm{Y(\zeta)|_{\zeta = 0} = 0, \hspace{0.4 cm} Y(\zeta)|_{\zeta = 1} = 0. }
   \end{equation}
   The exact solution is
\begin{equation*}
   \rm{Y_{Exact}(\zeta) = \frac{\sin{(\pi \zeta)}}{\pi}}. 
\end{equation*}
   The following solutions are obtained for $\eta$ = 6:
\begin{equation*}
\begin{split}
   \rm{ Y_{VLWC}(\zeta)} &= (1.38 \times 10^{-16}) + 0.99 \zeta + 0.10 \zeta^{2} -2.20 \zeta^{3} + 1.10 \zeta^{4} - (7.04 \times 10^{-13}) \zeta^{5},\\
 \rm{Y_{VLWT}(\zeta)} &= (-2.77 \times 10^{-17}) + 0.93 \zeta + 0.64 \zeta^{2} -3.62 \zeta^{3} + 2.56 \zeta^{4} -0.51 \zeta^{5},\\
 \rm{ Y_{VLWG}(\zeta)} &= 1.00 \zeta - 0.03 \zeta^{2} - 1.39 \zeta^{3} - 0.75 \zeta^{4} + 1.96 \zeta^{5} -0.90 \zeta^{6} + 0.12 \zeta^{7}.
 \end{split}
\end{equation*}
   \end{example}

\begin{example}\label{ex3}
Consider the non homogeneous Emden-Fowler type SDE \cite{wazwaz2002new}:
\begin{equation}
   \rm{ Y''(\zeta)+\frac{8}{\zeta}Y'(\zeta) + \zeta Y(\zeta) =\zeta^{5} - \zeta^{4} + 44 \zeta^{2} - 30 \zeta,\hspace{0.5 cm}  \zeta \geq 0,
}\end{equation}
with 
\begin{equation}
   \rm{Y(\zeta)|_{\zeta = 0} = 0, \hspace{0.4 cm} Y'(\zeta)|_{\zeta = 0} = 0. }
   \end{equation}

The following solutions are obtained for $\eta$ = 5:
\begin{equation*}
\begin{split}
   \rm{ Y_{VLWC}(\zeta)} &=\rm{(5.55 \times 10^{-16}) +  ( 9.32 \times 10^{-15}) \zeta^2 - \zeta^3 + \zeta^4},\\
    \rm{Y_{VLWT}(\zeta)} &=\rm{ \zeta^4 - \zeta^3},\\
\rm{ Y_{VLWG}(\zeta)} &= \rm{\zeta^4 - \zeta^3}.
 \end{split}
\end{equation*}
 The above solutions are computed in the interval $0\leq \zeta \leq 2$ and it is observed that both $\rm{Y_{VLWT}}$ and $\rm{Y_{VLWG}}$ yield the exact solution. 
  \end{example}


\begin{example} 
\label{ex4}
Consider the nonlinear Emden-Fowler type SDE \cite{wazwaz2005adomian}:
\begin{equation}
   \rm{ Y''(\zeta)+\frac{8}{\zeta}Y'(\zeta) + 18 Y(\zeta) = - 4 Y(\zeta) \ln{(Y(\zeta})), \hspace{0.5 cm} 0< \zeta \leq 1,
}\end{equation}
with initial conditions
\begin{equation}
   \rm{Y(\zeta)|_{\zeta=0}=1, \hspace{0.4 cm} Y'(\zeta)|_{\zeta=0}=0. }
   \end{equation}
   \end{example}
The approximated solutions obtained by using proposed schemes for $\eta$ = 3 :
\begin{align*}
   \rm{ Y_{VLWC}(\zeta)} &=\rm{ e^{(-3.33067\times 10^{-16}) -  \zeta^2}},\\
 \rm{Y_{VLWT}(\zeta)} &=\rm{ e^{- \zeta^2}},\\
 \rm{ Y_{VLWG}(\zeta)} &= \rm{e^{- \zeta^2}}.
 \end{align*}
Here, $\rm{Y_{VLWT}}$ and $\rm{Y_{VLWG}}$ leads to the exact solution.  

\begin{example} 
\label{ex5}
Let us consider the nonlinear Lane-Emden type SDE \cite{wazwaz2002new}:
\begin{equation}
   \rm{ Y''(\zeta)+\frac{2}{\zeta}Y'(\zeta) - 6 Y(\zeta) =  4  Y(\zeta) \ln{(Y(\zeta))}, \hspace{0.5 cm} 0< \zeta \leq 1,
}\end{equation}
with initial conditions
\begin{equation}
   \rm{Y(\zeta)|_{\zeta=0}=1, \hspace{0.4 cm} Y'(\zeta)|_{\zeta=0}=0. }
   \end{equation}
   \end{example}
The approximated solutions obtained by using proposed schemes at $\eta$ = 3:
\begin{align*}
   \rm{ Y_{VLWC}(\zeta)} &=\rm{ e^{(3.8857\times 10^{-16}) +  \zeta^2}},\\
 \rm{Y_{VLWT}(\zeta)} &=\rm{ e^{ \zeta^2}},\\
 \rm{ Y_{VLWG}(\zeta)} &= \rm{e^{\zeta^2}}.
\end{align*}
The solutions obtained by $\rm{Y_{VLWT}}$ and $\rm{Y_{VLWG}}$ yield the exact solution.


  

\section{Results and Discussions}
\label{sec9}

\begin{figure}
	\begin{minipage}[]{0.52\textwidth}
	\centering
		\includegraphics[width=\linewidth]{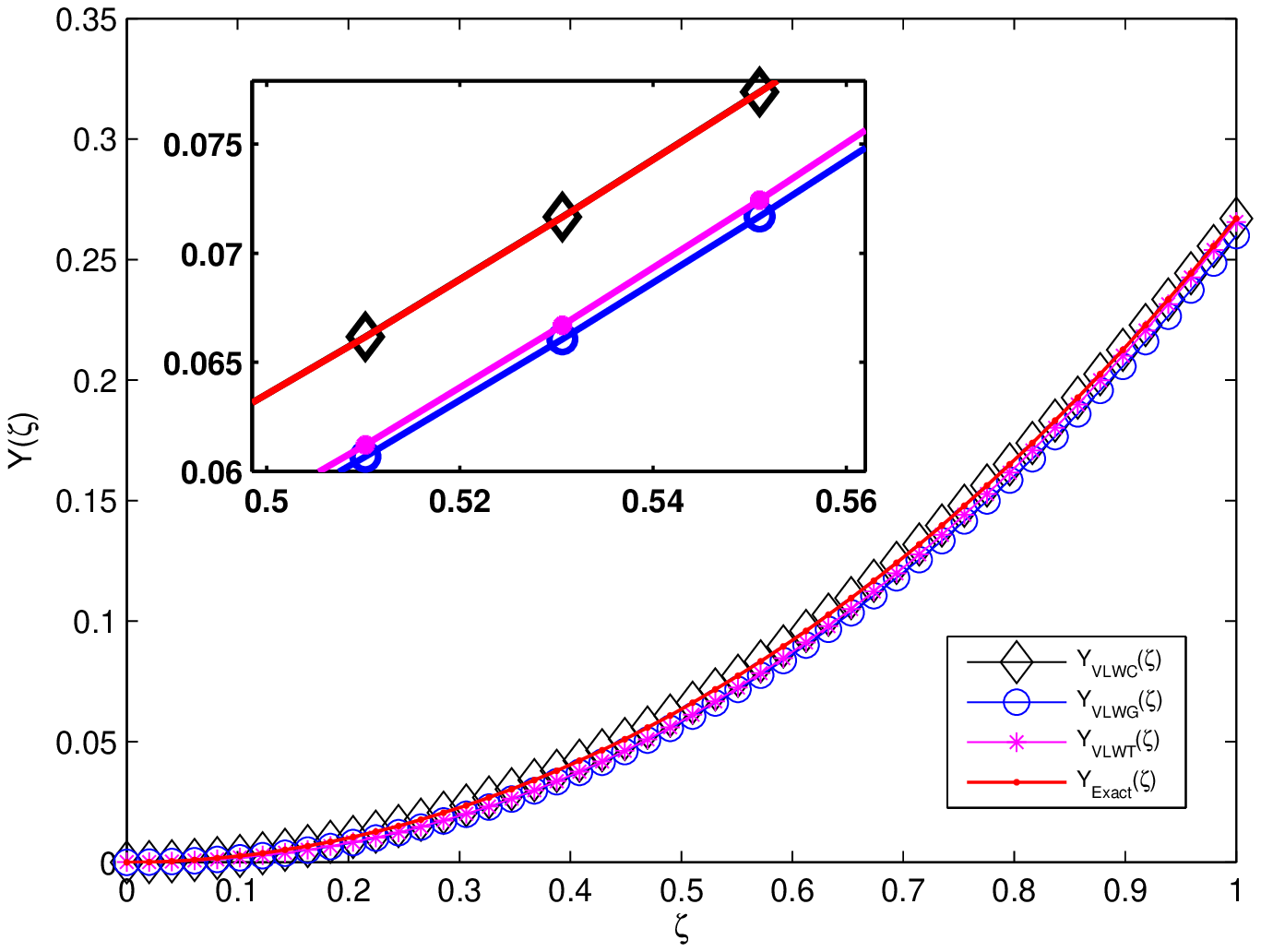}
		\subcaption{$\eta=6$}
	\end{minipage}
	\begin{minipage}[]{0.52\textwidth}
		\includegraphics[width=\linewidth]{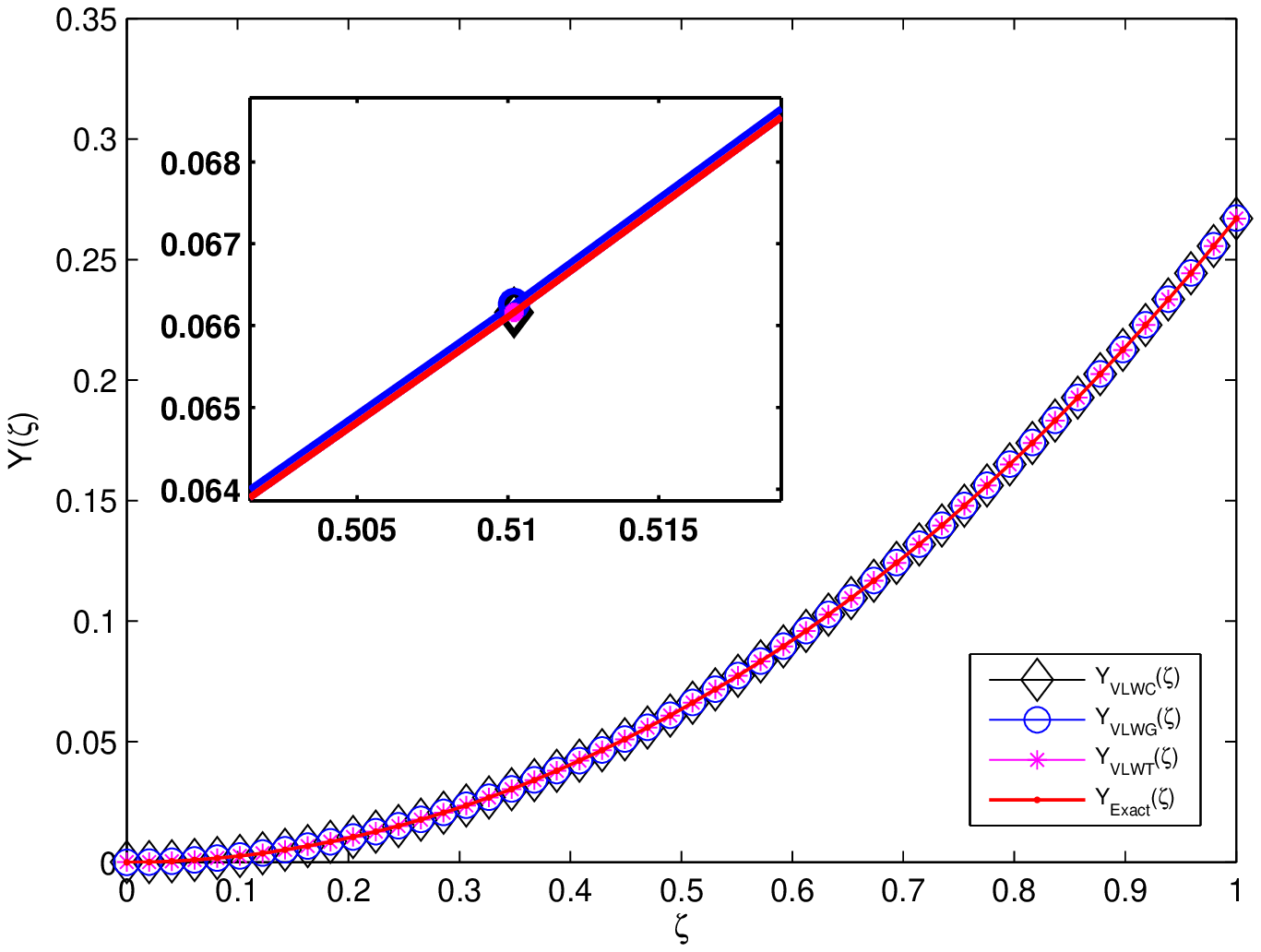}
		\subcaption{$\eta = 12$}
	\end{minipage}
 \begin{minipage}[]{0.52\textwidth}
	\centering
		\includegraphics[width=\linewidth]{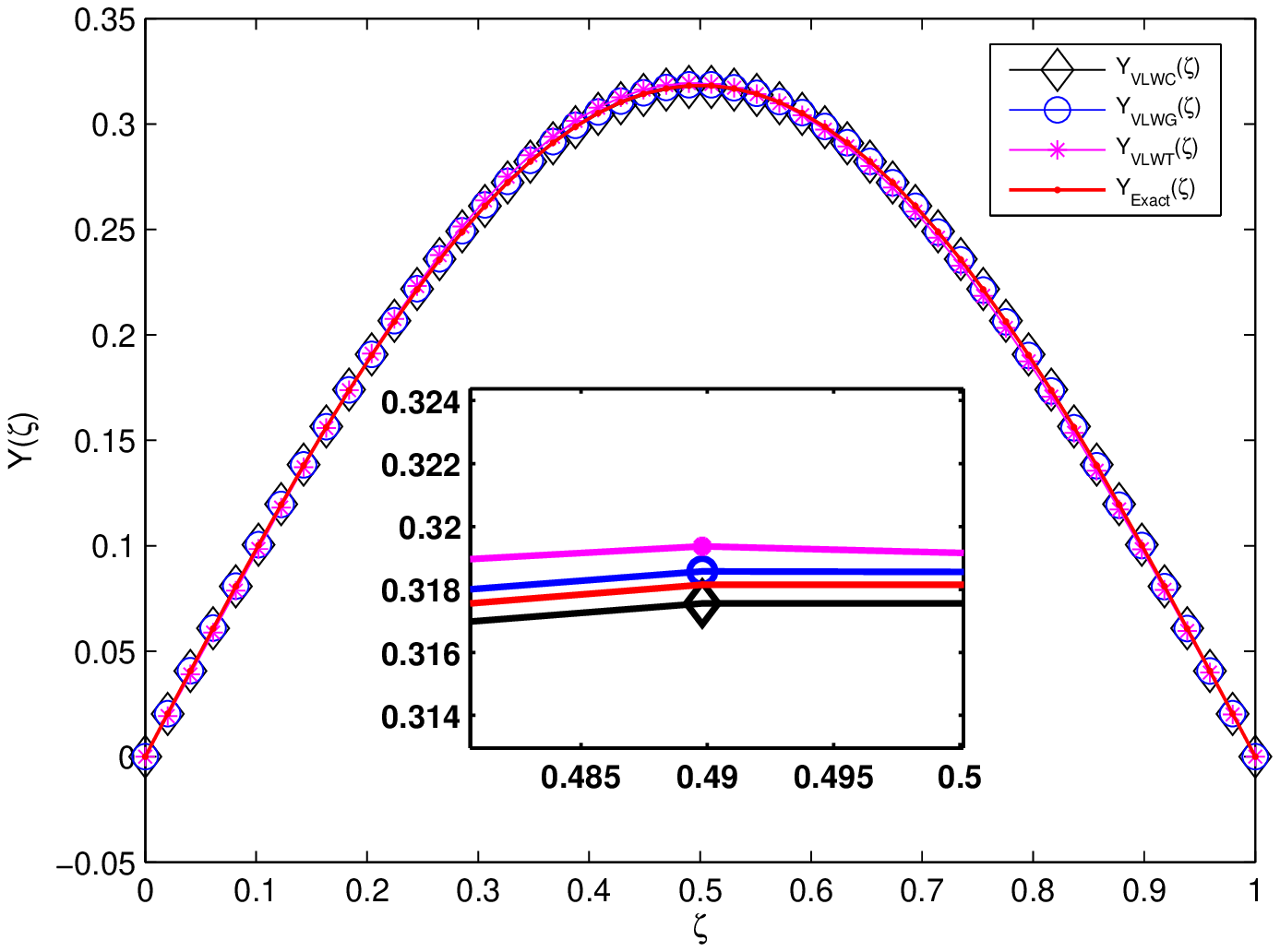}
		\subcaption{ $\eta = 6$}
	\end{minipage}
	\begin{minipage}[]{0.52\textwidth}
		\includegraphics[width=\linewidth]{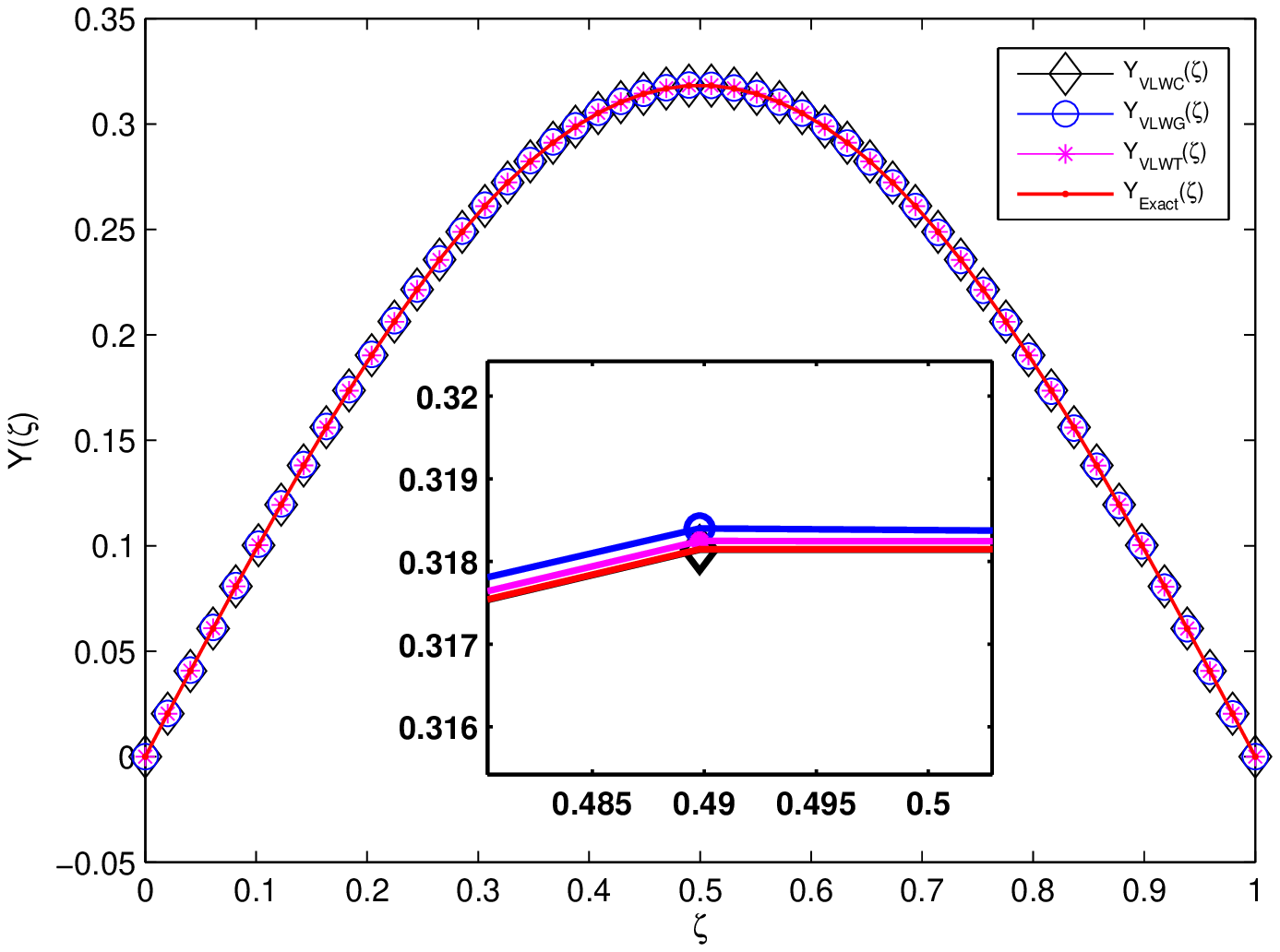}
		\subcaption{$\eta = 8$}
	\end{minipage}
		
	\caption{\label{f1} Solution curves for (a)-(b) Example \ref{ex1} and (c)-(d) Example \ref{ex2}.} 
\end{figure}

\begin{figure}
	
	\begin{minipage}[]{0.52\textwidth}
	\centering
	\includegraphics[width=\linewidth]{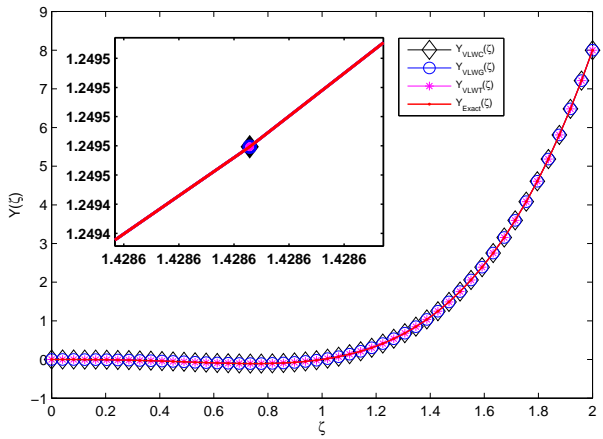}
		\subcaption{ $\eta = 5$}
	\end{minipage}
	\begin{minipage}[]{0.52\textwidth}
\centering	
 \includegraphics[width=\linewidth]{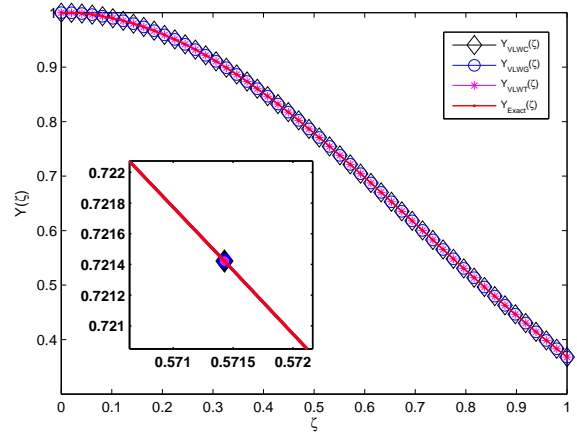}
		\subcaption{$\eta = 3$}
	\end{minipage}
 \vspace{0.5 cm}
	\begin{center}
	   \begin{minipage}{0.52\textwidth}
		\includegraphics[width=\linewidth]{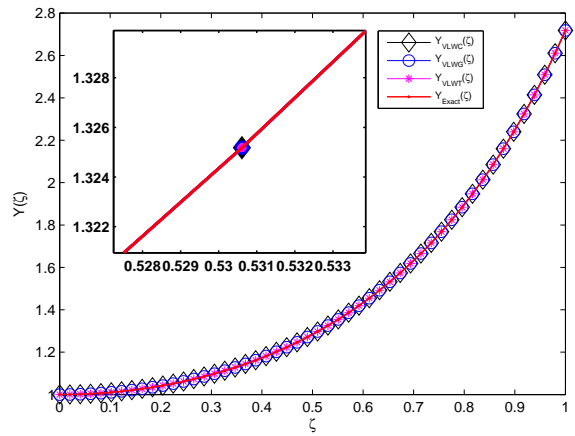}
		\subcaption{$\eta = 3$}
		\end{minipage}\hfill
	\end{center}
		
	\caption{\label{f2} Solution curves for (a) Example \ref{ex3}  (b) Example \ref{ex4} and (c) Example \ref{ex5}. } 
\end{figure}

\begin{figure}
	\begin{minipage}[]{0.52\textwidth}
	\centering
		\includegraphics[width=\linewidth]{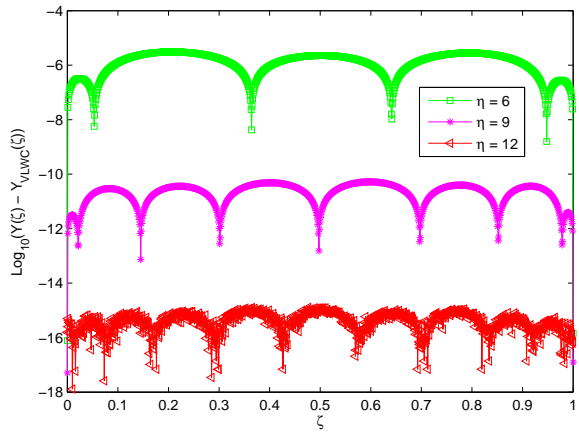}
		\subcaption{$\rm{Y_{VLWC}(\zeta)}$}
	\end{minipage}
		   \begin{minipage}{0.52\textwidth}
		\includegraphics[width=\linewidth]{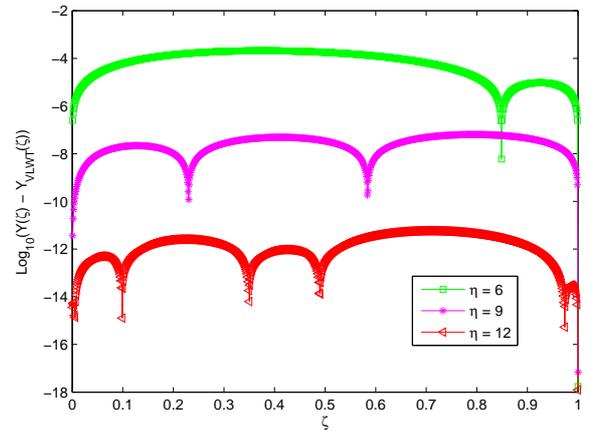}
		\subcaption{$\rm{Y_{VLWT}(\zeta)}$}
		\end{minipage}\hfill
	\vspace{0.5 cm}
\begin{center}
			\begin{minipage}[]{0.52\textwidth}
	\centering
		\includegraphics[width=\linewidth]{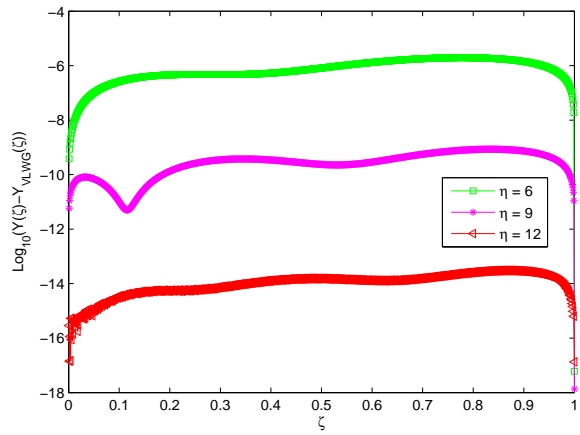}
		\subcaption{$\rm{Y_{VLWG}(\zeta)}$}
	\end{minipage}
	\end{center}
	\caption{\label{f3} Comparison of logarithmic values of absolute errors at different resolutions for Example \ref{ex1} by proposed approaches. }
\end{figure}

\begin{figure}
	\begin{minipage}[]{0.52\textwidth}
	\centering
		\includegraphics[width=\linewidth]{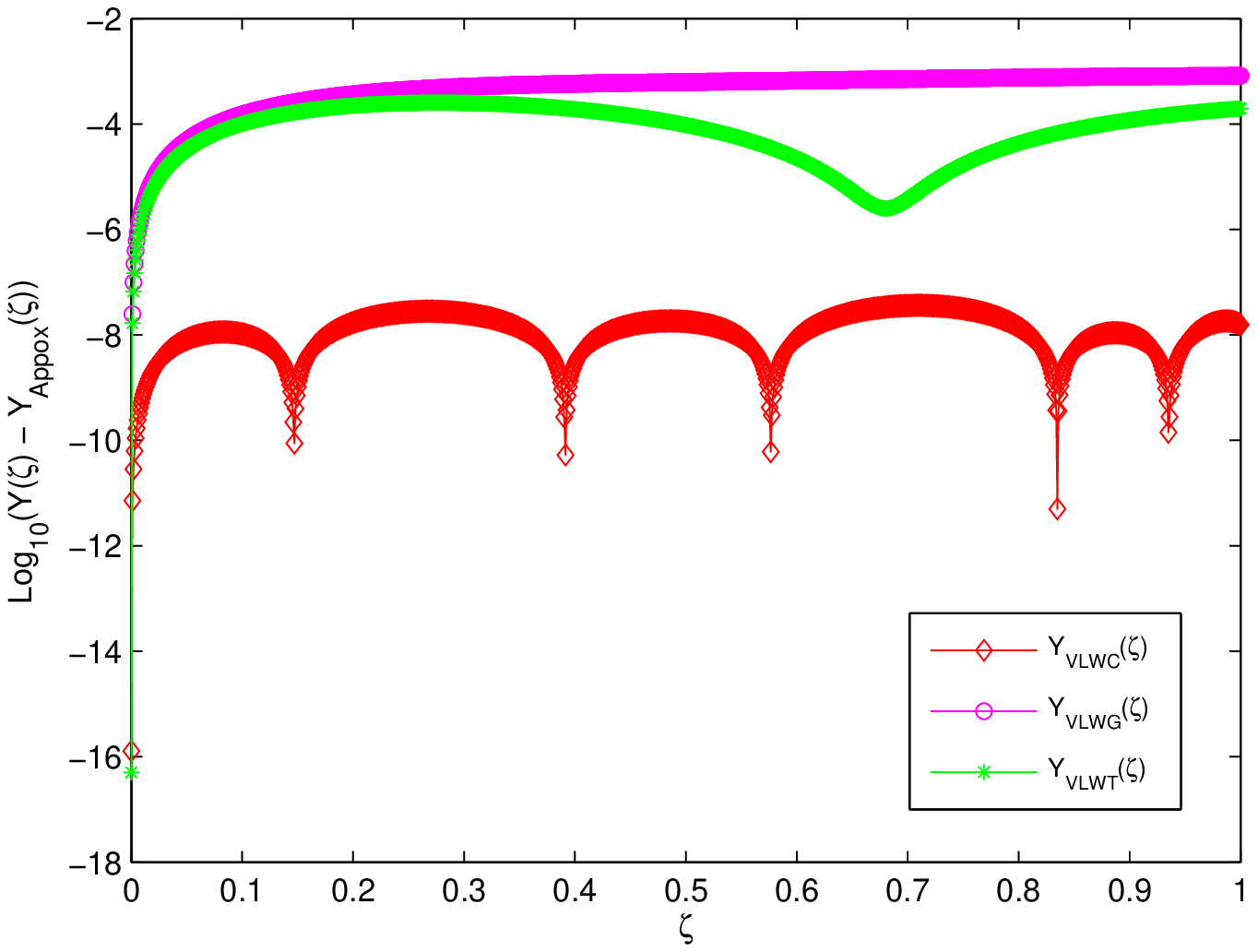}
		\subcaption{$\eta = 8$}
	\end{minipage}
	\begin{minipage}[]{0.52\textwidth}
	\centering
		\includegraphics[width=\linewidth]{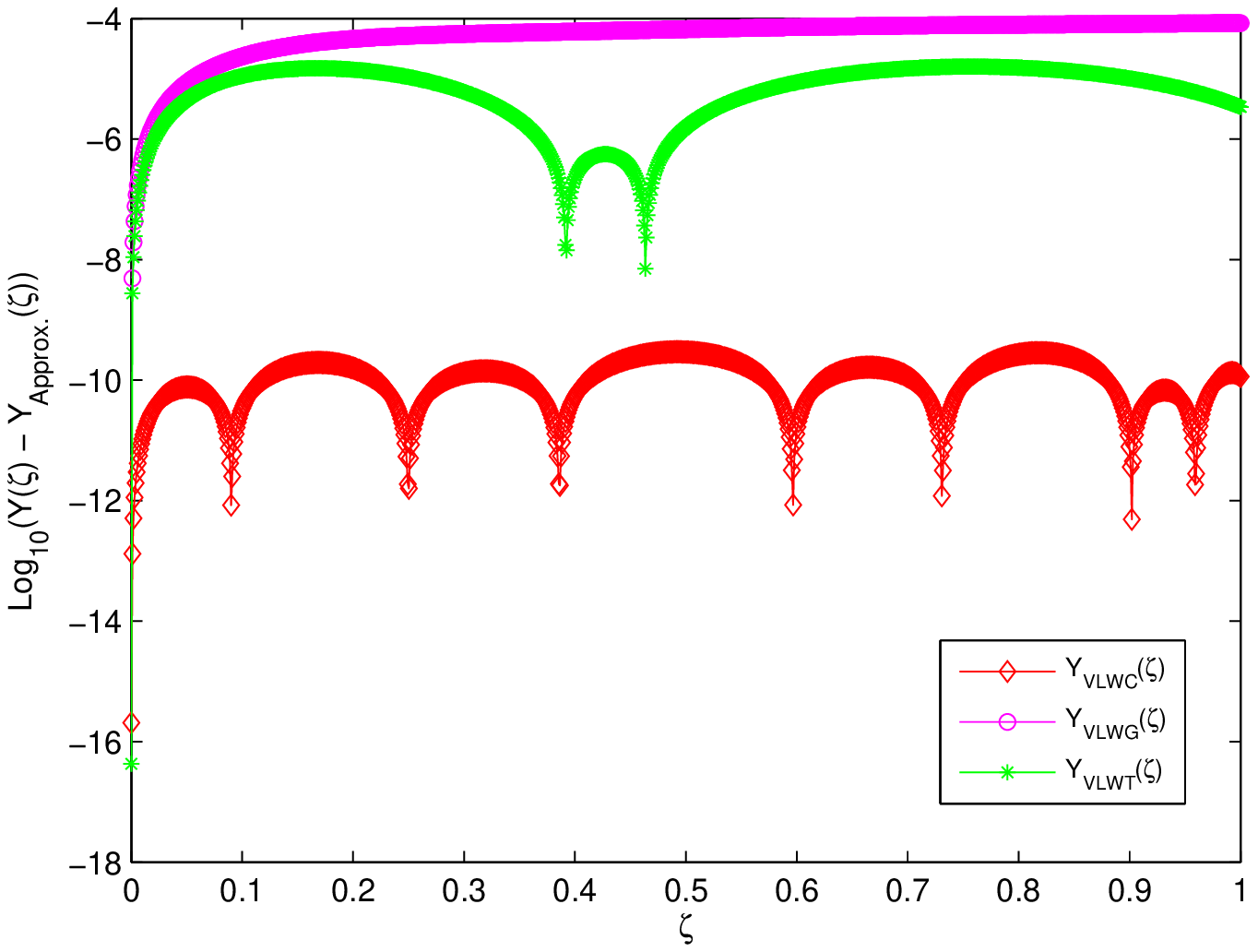}
		\subcaption{$\eta = 10$}
	\end{minipage}
	\vspace{0.5 cm}
	\begin{center}
	   \begin{minipage}{0.52\textwidth}
		\includegraphics[width=\linewidth]{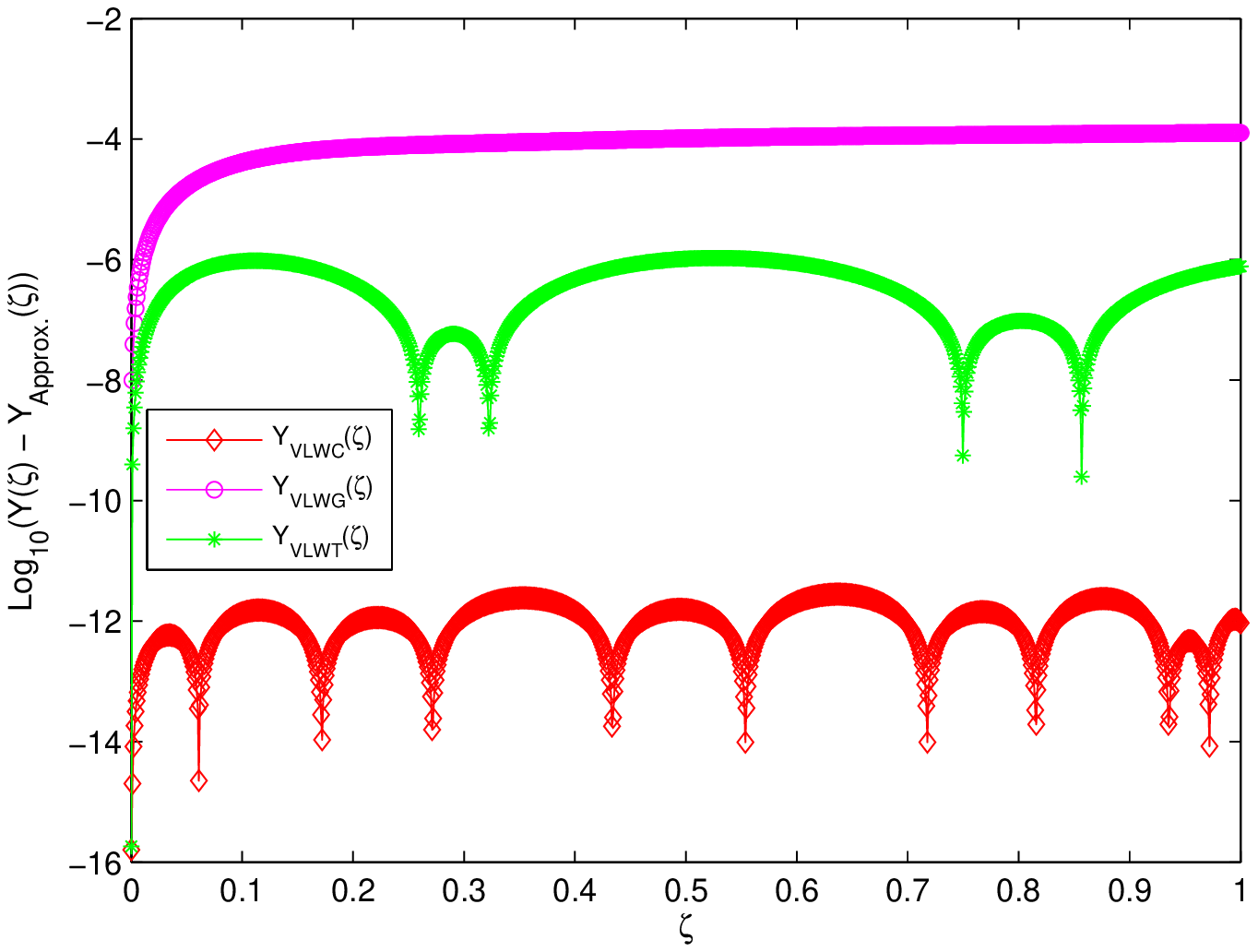}
		\subcaption{$\eta = 12$}
		\end{minipage}\hfill
	\end{center}
\caption{\label{f4} Comparison of logarithmic values of absolute errors of proposed schemes for Example \ref{ex1} at $\eta = 8, 10$ and 12. }
\end{figure}

Figure \ref{f1} demonstrates the solution curves of the exact solution and approximate solutions by the proposed numerical approaches for Example \ref{ex1} and \ref{ex2} at different resolutions. It is observed from the zoomed profile of figure \ref{f1} that as the resolution increases the approximate solutions become more accurate and overlap the exact solution curve. The solution plots for Example \ref{ex3}, \ref{ex4} and \ref{ex5} are shown in figure \ref{f2}. It can be seen from figure \ref{f2} that the proposed approaches provide accurate results at the smaller resolutions which proves that all three proposed numerical approaches are reliable. Figure \ref{f3} depicts a comparison of logarithmic values of absolute errors at various resolutions for Example \ref{ex1}, as obtained by proposed numerical approaches. It demonstrates that the errors are highest for $\eta=6$ and least for $\eta=12$, indicating that as the resolution increases, the error reduces for all three proposed numerical schemes. Figure \ref{f4} shows a comparative analysis of the presented numerical schemes at various resolutions, and it is observed that the errors are bounded for all of the proposed schemes, demonstrating the efficiency and accuracy of the schemes.\\
The absolute error obtained by the proposed methods for Example \ref{ex1} and Example \ref{ex2}  are compared in Table \ref{Table1}.
In Example \ref{ex3}, \ref{ex4} and \ref{ex5}, $\rm{Y_{VLWT}(\zeta)}$ and $\rm{Y_{VLWG}(\zeta)}$ produce the exact solution. Table \ref{Table2} compares the absolute error obtained by $\rm{Y_{VLWC}(\zeta)}$ with the existing findings. The solutions obtained from the proposed approaches are in good agreement with the existing results, demonstrating the reliability of the proposed schemes.

\begin{table}[ht]
\begin{center}
\centering
\caption{\label{Table1} Absolute error comparisons for Example \ref{ex1} and Example \ref{ex2}.} 
\begin{tabular}{p{2cm}p{3cm}p{3cm}p{3cm}p{3cm} } 
  \toprule
  \multicolumn{5}{c}{Example \ref{ex1}} \\ \hline
   $\zeta$   & Exact Solution  & $\rm{\mid Y(\zeta) - Y_{VLWC}(\zeta) \mid}$  &$\rm{\mid Y(\zeta) - Y_{VLWT}(\zeta)\mid}$ & $\rm{\mid Y(\zeta) - Y_{VLWG}(\zeta) \mid}$ \\
  \midrule
     0.1  &0.0025015      &1.3701E-12  &9.4083E-07 &4.1019E-05\\ 
    0.2 &0.0100250   &9.0015E-13  &3.9899E-07 &7.1956E-05 \\ 
    0.3 &0.0226275  &1.1882E-12  &5.3283E-08 &8.3708E-05 \\ 
    0.4 &0.0404054     &1.4496E-12  &4.7147E-07 &9.2989E-05 \\ 
    0.5  &0.0634973  &1.5533E-12  &1.0288E-06 &1.0235E-04 \\ 
    0.6  &0.0920878  &2.0826E-12  &8.6287E-07 &1.0947E-04 \\ 
     0.7  &0.1264121 &8.8321E-13 &2.4217E-07  &1.1425E-04 \\ 
    0.8 &0.1667632  &7.3194E-13  &9.5880E-08 &1.1836E-04 \\ 
    0.9 &0.2135007  &1.8588E-12 &1.9422E-07  &1.2276E-04\\ 
    1.0 &0.2670627  &9.3192E-13  &7.7329E-07 &1.2706E-04 \\
 \toprule 
  \multicolumn{5}{c}{Example \ref{ex2}} \\ 
  \midrule
   0.1 & 0.0983631  &1.7106E-04    &  1.7790E-03  & 2.0126E-04\\ 
    0.2  &0.1870978 &3.9765E-04   &   6.3775E-04 & 3.4647E-04  \\   
    0.3  &0.2575181  &1.2307E-04   & 2.6134E-03 & 5.0466E-04 \\ 
    0.4  &0.3027306 & 3.6135E-04   & 2.6338E-03  & 5.4599E-04 \\    
    0.5  &0.3183098 &5.9103E-04   & 1.0177E-03  & 4.0975E-04 \\   
    0.6  &0.3027306 &3.6135E-04   & 1.1302E-03  & 1.5710E-04 \\  
    0.7  &0.2575181 &1.2307E-04   & 2.6834E-03  & 8.3777E-05 \\   
    0.8  &0.1870978  &3.9765E-04  & 2.9582E-03 & 2.0029E-04 \\  
    0.9  &0.0983631  &1.7106E-04    & 1.8955E-03 & 1.5510E-04 \\  
  \bottomrule
\end{tabular}
\end{center}
\end{table}

\begin{table}[ht]
\begin{center}
\caption{\label{Table2}  Absolute error comparisons for Example \ref{ex3}, Example \ref{ex4} and Example \ref{ex5}. }
\begin{tabular}{p{3.5cm}p{3.5cm}p{3.5cm}p{3.5cm}} 
  \toprule 
  \multicolumn{4}{c}{Example \ref{ex3}} \\ \hline
   $\zeta$  & Exact Solution & HFC Error\cite{parand2010approximation}  & $\rm{\mid Y(\zeta) - Y_{VLWC}(\zeta) \mid}$  \\ 
  \midrule
    0.01 & -0.0000009 &5.7E-08 & 5.5603E-16   \\  
    0.10 & -0.0009000 &8.4E-08 &6.3881E-16    \\  
    0.50 & -0.0625000 &2.2E-06 &1.8318E-15    \\  
    1.00 & 0.0000000 &8.2E-07 &2.7755E-15 \\ 
     2.00 & 8.0000000 &1.7E-07 &1.7763E-15   \\  
  \toprule 
  \multicolumn{4}{c}{Example \ref{ex4}} \\ 
  \midrule
    $\zeta$  & Exact Solution & LWM Error\cite{aminikhah2013numerical}  & $\rm{\mid Y(\zeta) - Y_{VLWC}(\zeta) \mid}$  \\
  \midrule
    0.1 &0.9900498 &4.8E-06 &3.3306E-16 \\ 
    0.2  &0.9607894 & 6.8E-06 &3.3306E-16  \\  
    0.3  &0.9139311 & 8.0E-07 &3.3306E-16  \\ 
    0.4  &0.8521437 & 8.3E-06 & 3.3306E-16  \\  
    0.5  &0.7788007 & 1.2E-05 &2.2204E-16    \\ 
    0.6  &0.6976763 & 5.3E-05 &1.1102E-16    \\ 
    0.7  &0.6126263 & 2.0E-04 &1.1102E-16   \\  
    0.8  &0.5272924 & 5.9E-04 &2.2204E-16 \\  
    0.9  &0.4448580 & 1.4E-03 &5.5511E-17    \\ 
    1.0 & 0.3678794 &3.0E-03  &5.6511E-17    \\
    \toprule 
  \multicolumn{4}{c}{Example \ref{ex5}} \\ 
  \midrule
  $\zeta$  & Exact Solution & HFC Error\cite{parand2010approximation}  & $\rm{\mid Y(\zeta) - Y_{VLWC}(\zeta) \mid}$  \\
  \midrule
   0.01  &1.0001000 & 2.2E-08  &4.4408E-16  \\ 
    0.02  &1.0004000  & 1.5E-08 &2.2204E-16   \\ 
     0.05  &1.0025031 & 2.1E-08 &2.2204E-16  \\ 
    0.1 &1.0100501 & 1.7E-08  &4.4408E-16  \\ 
    0.2  &1.0408107 & 2.1E-08  &4.4408E-16  \\ 
    0.5  &1.2840254 & 3.0E-08 &4.4408E-16    \\   
    0.7  &1.6323162  & 4.2E-08  &6.6613E-16 \\  
    0.8  &1.8964808 & 5.1E-08 &2.2204E-16  \\  
    0.9  &2.2479079 & 9.2E-08 &4.4408E-16  \\  
    1.0  &2.7182818 & 8.8E-08 &8.8817E-16  \\
    \bottomrule
\end{tabular}
\end{center}
\end{table}

\section{Conclusion}
\label{sec10}
 In this work, the second-order SDEs are analyzed by using the novel numerical schemes which are based on Vieta-Lucas wavelets. The operational matrix approach combined with three weighted residual approaches is used to solve singular IVPs and singular BVPs. Simultaneously two convergence theorems are proved, and the upper bound for the tolerable error is explored. The numerical examples are taken to demonstrate the efficacy of the proposed approaches. The approximate solutions derived through the proposed methods are found to be extremely accurate, which proves the accuracy and applicability. 
 
\section{Declaration}
 \textbf{Acknowledgments}: This study is supported via funding from Prince Sattam bin Abdulaziz University project number (PSAU/2023/R/1444) .\\
 \textbf{Conflict of interest}: Authors have no conflict of interest.\\
 \textbf{Availability of data and material}: Not applicable. \\
  \textbf{Compliance with ethical standards.} \\

\end{document}